\newcommand{\disjunt}{\ltimes}
\newtheorem{proposition}{Proposition}
\newtheorem{lemma}[proposition]{Lemma}
\newtheorem{teorema}[proposition]{Theorem}
\newtheorem{coro}[proposition]{Corollary}
\theoremstyle{definition}
\newtheorem{definition}{Definition}[section]
\theoremstyle{remark}
\newtheorem{remark}{Remark}
\title{Total curvature of complete surfaces in hyperbolic space}
\author{Gil Solanes}
\address{Departament de Matem\`atiques\\Univeristat Autònoma de Barcelona\\Edifici Cc. Campus de la UAB\\08193 Bellaterra}
\email{solanes@mat.uab.cat}
\subjclass{53C65}
\thanks{Work partially supported by FEDER/MEC grant number MTM2006-04353 and the Ram\'on y Cajal program.}
\keywords{Integral geometry, total curvature, hyperbolic space, open surfaces.}
\begin{document}\maketitle

\begin{abstract}
We prove a Gauss-Bonnet formula for the extrinsic
curvature of complete surfaces in hyperbolic space under some
assumptions on the asymptotic behaviour. The result is given in terms of the measure of geodesics intersecting the surface non-trivially, and of a conformal invariant of the curve at infinity.
\end{abstract}

\section{Introduction and main results}
In this paper we prove a Gauss-Bonnet formula for the total
extrinsic curvature of complete surfaces in hyperbolic space. Our
result is analogous to those obtained by Dillen and K\"uhnel in \cite{kuehnel} for
submanifolds of euclidean space, where the
total curvature of a submanifold $S$ is given in terms of the Euler
characteristic $\chi(S)$, and the geometry of $S$ at infinity (see also Dutertre's work \cite{dutertre} on semi-algebraic sets).

Our starting point is the following well-know equality for $S\looparrowright\mathbb  H^3$, a compact
surface with boundary immersed in hyperbolic 3-space:
\begin{equation}\label{motivacio}
\int_S K dS=2\pi\chi(S)+F(S)-\int_{\partial S}k_g ds
\end{equation}
being $K$ the extrinsic curvature of $S$ (i.e. the product of its principal curvatures), $F(S)$ the area, and $k_g$ the
geodesic curvature of $\partial S$ in $S$. This formula follows from the classical (intrinsic) Gauss-Bonnet theorem, and the Gauss equation.  We plan to make $S$
expand over a complete non-compact surface, but the last two terms in \eqref{motivacio}  are likely to
become infinite. To avoid an indeterminate form, we add and
subtract the \emph{area} enclosed by the curve $\partial S$. Such a notion was defined by
Banchoff and Pohl (cf. \cite{banchoff.pohl} and also \cite{teufel}) for any closed space curve $C$ as
\[
\mathcal A(C):=\frac{1}{\pi}\int_{\mathcal{L}}\lambda^2(\ell,C)d\ell
\]
where $\mathcal{L}$ is (in our case) the space of geodesics in
$\mathbb H^3$, $d\ell$ is the invariant measure on $\mathcal L$ (unique up to normalization), and
$\lambda(\ell,C)$ is the linking number of $C$
with $\ell\in\mathcal L$. This definition was motivated by the
Crofton formula which states
\begin{equation}\label{crofton}
F(S)=\frac{1}{\pi}\int_{\mathcal L}\#(\ell\cap S)d\ell,
\end{equation}
where $\#$ stands for the cardinal.
Hence, we can rewrite \eqref{motivacio} as follows
\begin{equation}\label{type}\notag
\int_S K dS=2\pi\chi(S)+\frac{1}{\pi}\int_{\mathcal{L}}(\#(\ell\cap
S)-\lambda^2(\ell,\partial S))d\ell+\mathcal A(\partial S)-\int_{\partial S}k_gds.
\end{equation}
Our main result is a similar formula  for
complete surfaces in $\mathbb H^3$ defining a smooth curve
$C$ in $\partial_\infty \mathbb H^3$, the ideal boundary of hyperbolic space. In that case, the last two terms of the previous equation are replaced by a conformal (or M\"obius) invariant of the  geometry of $C$ in $\partial_\infty\mathbb H^3$. To be precise, our result applies to
surfaces with {\em cone-like ends} in the sense defined next. A similar
notion of cone-like ends for submanifolds in euclidean space appears in \cite{kuehnel}.

\begin{definition}\label{conelike}
Let $f\colon S\looparrowright \mathbb H^3$ be an immersion of a  $\mathcal C^2$-differentiable surface $S$ in hyperbolic space. We say $S$
has \emph{cone-like ends} if
\begin{enumerate}
\item[i)] $S$ is the interior of a compact surface with boundary  $\overline S$, and taking the
Poincar\'{e} half-space model of hyperbolic space,  $f$ extends to a  $\mathcal C^2$-differentiable immersion $f:\overline{S}\looparrowright\mathbb R^3$,
\item[ii)] $C=f(\partial \overline S)$ is a collection of simple closed curves contained in $\partial_\infty \mathbb H^3$, the boundary of the model, and
\item[iii)] $f(\overline S)$ is orthogonal to $\partial_\infty\mathbb H^3$ along $C$.

\end{enumerate}
\end{definition}

In particular, such a surface is complete with the induced metric. We will
see that surfaces with cone-like ends have finite total extrinsic
curvature. There are also examples of complete non-compact surfaces
with finite total extrinsic curvature which do not fulfill $i)$ or $ii)$ in the previous definition. Condition $iii)$ however is necessary for the total curvature to be finite: the limit of the extrinsic curvature of $S$ at an ideal point $x\in C$  is $\cos^2(\beta)$ where $\beta$ is the angle between $S$ and $\partial_\infty\mathbb H^3$ at $x$.  

In the Klein (or projective) model, the definition reads the same, but
replacing  the word `orthogonal' by `transverse'.  We
will mainly work with the Poincar\'{e} half-space model. Unless
otherwise stated all the metric notions (such as length, area or
curvature) will refer to the hyperbolic metric. 

\bigskip
Given a connected oriented curve $C\subset \partial_\infty\mathbb H^3\equiv
\mathbb R^2$, and a pair of distinct points $x,y\in C$, let us
consider the oriented angle at $x$ from  $C$ to the oriented circle through $x$ that is positively tangent to $C$ at $y$. This angle admits a unique continuous determination
$\theta:C\times C\rightarrow \mathbb R$ that vanishes on the
diagonal. Note that $\theta(y,x)=\theta(x,y)$ and $\theta$ is
independent of the orientation of $C$.

We will prove the following result.
\begin{teorema}\label{main}
Let $S\subset \mathbb H^3$ be a simply connected surface of class $\mathcal C^2$, embedded in the Poincar\'e half-space model of hyperbolic space, and with a (connected) cone-like end $C\subset\partial_\infty\mathbb H^3$.  Then, the integral over $S$ of the extrinsic curvature $K$ is 
\begin{equation}\label{eqmain}
\int_S KdS=\frac{1}{\pi}\int_{\mathcal{L}}(\#(\ell\cap
S)-\lambda^2(\ell,C))d\ell-\frac{1}{\pi}\int_{C\times
C}\theta\sin\theta\frac{dxdy}{\|y-x\|^2}
\end{equation}where \begin{itemize}
\item $d\ell$ is an invariant  measure on the space of geodesics $\mathcal L$,
\item$\lambda^2(\ell,C)$ is 1 if the ideal endpoints of $\ell$ are on different components of
$\partial_\infty \mathbb H^3\setminus C$ and $0$ otherwise, and
\item $dx, dy$ denote length elements on $C$ with respect to the
euclidean metric $\|\cdot \|$ on $\partial_\infty\mathbb H^3\equiv\mathbb
R^2$.
\end{itemize}The
integrals in \eqref{eqmain} are absolutely convergent.
\end{teorema}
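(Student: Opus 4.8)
The plan is to start from the classical Gauss-Bonnet formula \eqref{motivacio} applied to a compact exhaustion of $S$, and to control the limit of each term. Concretely, fix a defining function for $\partial_\infty\mathbb H^3$ (the height coordinate $t$ in the half-space model) and let $S_\varepsilon=S\cap\{t\ge\varepsilon\}$; by the cone-like condition, for small $\varepsilon$ this is a compact surface with boundary $\partial S_\varepsilon$ a curve close to $C$, and $\chi(S_\varepsilon)=\chi(\overline S)=1$ since $S$ is simply connected with one end. Then \eqref{motivacio} reads $\int_{S_\varepsilon}KdS=2\pi+F(S_\varepsilon)-\int_{\partial S_\varepsilon}k_g\,ds$, and after substituting Crofton \eqref{crofton} for $F(S_\varepsilon)$ and adding/subtracting the Banchoff-Pohl area $\mathcal A(\partial S_\varepsilon)$ we get
\begin{equation}\label{planeq}\notag
\int_{S_\varepsilon}KdS=2\pi+\frac{1}{\pi}\int_{\mathcal L}(\#(\ell\cap S_\varepsilon)-\lambda^2(\ell,\partial S_\varepsilon))\,d\ell-\mathcal A(\partial S_\varepsilon)+\int_{\partial S_\varepsilon}k_g\,ds.
\end{equation}
So everything reduces to: (a) the left side converges, by the stated finiteness of total curvature, which in turn follows from the blow-up analysis near $C$ (the curvature tends to $\cos^2\beta=0$ by orthogonality, fast enough to be integrable — this needs a local model computation near an ideal boundary point); (b) $\lambda^2(\ell,\partial S_\varepsilon)\to\lambda^2(\ell,C)$ pointwise with domination, since $\partial S_\varepsilon$ is homotopic in $\{t>0\}$ to a small pushoff of $C$ and the linking number is constant along the homotopy, giving $\frac1\pi\int_{\mathcal L}(\#(\ell\cap S_\varepsilon)-\lambda^2(\ell,\partial S_\varepsilon))d\ell\to\frac1\pi\int_{\mathcal L}(\#(\ell\cap S)-\lambda^2(\ell,C))d\ell$; and (c) the boundary contribution $2\pi-\mathcal A(\partial S_\varepsilon)+\int_{\partial S_\varepsilon}k_g\,ds$ converges to $-\frac1\pi\int_{C\times C}\theta\sin\theta\,\|y-x\|^{-2}dxdy$.

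\textbf{The main work — and the main obstacle — is step (c): identifying the boundary limit with the conformal integral over $C\times C$.} The strategy is to express both $\mathcal A(\partial S_\varepsilon)$ and $\int_{\partial S_\varepsilon}k_g\,ds$ as integrals over $\partial S_\varepsilon\times\partial S_\varepsilon$ (or over the unit tangent bundle) of kernels that are explicit in the half-space coordinates, and then pass to the limit $\varepsilon\to0$ where $\partial S_\varepsilon$ degenerates onto $C\subset\{t=0\}$. For the geodesic curvature term, one writes $k_g$ in terms of the Euclidean geometry of $\partial S_\varepsilon$ and the hyperbolic metric factor $1/t$; the key point is that as the curve drops to the ideal boundary, the hyperbolic geodesic curvature of a curve near $\partial_\infty\mathbb H^3$ relates to the Euclidean curvature of $C$ and to the contact angle data. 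For the Banchoff-Pohl term, I expect to use a double-integral (Gauss-type) formula: the linking number $\lambda(\ell,\partial S_\varepsilon)$ integrated over $\mathcal L$ against itself produces, via a Crofton/Gauss-linking identity in $\mathbb H^3$, a double integral over $\partial S_\varepsilon\times\partial S_\varepsilon$ of a kernel depending on the hyperbolic distance and angles between the two tangent lines; its limit as the curve hits $\{t=0\}$ should be exactly the $\theta\sin\theta$ kernel. The angle $\theta(x,y)$ (the oriented angle from $C$ to the circle through $x$ positively tangent to $C$ at $y$) is precisely the asymptotic shadow on $\partial_\infty\mathbb H^3$ of the angle between tangent geodesics of $S$, because geodesics of $\mathbb H^3$ asymptotic to $\{t=0\}$ are Euclidean semicircles meeting it orthogonally. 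So the heart of the proof is a careful asymptotic expansion, in powers of $\varepsilon$, of these two kernel integrals, showing the divergent pieces cancel between $-\mathcal A$ and $\int k_g$ and the finite remainder is the claimed conformal invariant.

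Two auxiliary points must be handled carefully. First, \textbf{convergence and domination}: one must check that $\#(\ell\cap S)-\lambda^2(\ell,C)$ is absolutely integrable on $\mathcal L$, and that the conformal double integral converges near the diagonal — near $x=y$ one has $\theta=O(\|y-x\|)$ (since $\theta$ vanishes on the diagonal and is $\mathcal C^1$), hence $\theta\sin\theta=O(\|y-x\|^2)$, exactly cancelling the $\|y-x\|^{-2}$, so the integrand is bounded there; away from the diagonal it decays because $C$ is compact. Conformal invariance of $\int_{C\times C}\theta\sin\theta\,\|y-x\|^{-2}dxdy$ under Möbius transformations of $\partial_\infty\mathbb H^3=\mathbb R^2\cup\{\infty\}$ is what makes the right-hand side independent of the chosen Euclidean metric $\|\cdot\|$; this can be verified directly, or seen as a consequence of the left-hand side being manifestly a hyperbolic (hence conformal) invariant. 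Second, \textbf{the exhaustion}: $t=\varepsilon$ need not be a regular value for all $\varepsilon$, but by Sard's theorem it is for almost all $\varepsilon$, which suffices to take the limit along a sequence; the cone-like hypothesis guarantees that for small regular $\varepsilon$ the sublevel $S_\varepsilon$ is diffeomorphic to $\overline S$ and $\partial S_\varepsilon$ is $\mathcal C^1$-close to $C$ after rescaling. Assembling (a), (b), (c) then yields \eqref{eqmain}, and the absolute convergence assertions are exactly the estimates collected along the way.
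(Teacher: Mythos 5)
There is a genuine gap, and you have in fact pointed at it yourself: everything in your plan reduces to step (c), the claim that
\[
2\pi-\mathcal A(\partial S_\varepsilon)+\int_{\partial S_\varepsilon}k_g\,ds\ \longrightarrow\ -\frac1\pi\int_{C\times C}\theta\sin\theta\,\frac{dxdy}{\|y-x\|^2},
\]
where both $\mathcal A(\partial S_\varepsilon)$ and $\int_{\partial S_\varepsilon}k_g\,ds$ diverge like $1/\varepsilon$ as the boundary curve drops to the ideal boundary. You describe this step only in the conditional (``I expect to use a double-integral formula'', ``should be exactly the $\theta\sin\theta$ kernel'', ``the heart of the proof is a careful asymptotic expansion''), but no kernel is written down, no cancellation of the divergent parts is verified, and the finite remainder is not computed. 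Since the identity you start from is just the motivational rearrangement of the classical Gauss--Bonnet formula given in the paper's introduction, step (c) \emph{is} the theorem; asserting it is not proving it. The chosen decomposition also makes the task harder than necessary: there is no ready-made Gauss-type self-linking double integral for the Banchoff--Pohl area in $\mathbb H^3$ that you could quote, so you would have to derive one and then match two separately divergent expansions. Two smaller points: in (b), domination is not available a priori --- the finiteness of $\int_{\mathcal L}(\#(\ell\cap S)-\lambda^2(\ell,C))\,d\ell$ is only obtained a posteriori from the formula itself, and the correct tool is monotone convergence of $\#(\ell\cap S_h)-\lambda^2(\ell,C_h)$ in $h$; in (a), the integrability of $K$ requires the quantitative estimate $\|\ii(x)\|_\infty=O(x_3)$, hence $K=O(x_3^2)$ against $dS=dS^e/x_3^2$, which you acknowledge but do not supply.

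The paper takes a structurally different route precisely to avoid separating the two divergent boundary terms. It works on the compactified space of chords $M_{S_h}$ and uses a $2$-form $\alpha$ on the space of geodesics satisfying $\alpha\wedge\alpha=-2\,d\ell$, so that $\pi^*\alpha\wedge\varphi$ is a primitive of $-2\pi^*d\ell$ on the flag bundle; after constructing a section of the pulled-back circle bundle adapted to $\partial S_h$, Stokes' theorem converts $\int_{\mathcal L}(\#(\ell\cap S_h)-\lambda^2(\ell,C_h))\,d\ell$ into a boundary integral over $\partial M_{S_h}$, which splits into the piece $T^1S_h$ (yielding $2\pi\int_{S_h}K$), the pieces $S_h\disjunt C_h$ and $C_h\disjunt S_h$ (shown to vanish as $h\to0$), and a collar of $C_h\disjunt C_h$ (converging to the ideal defect via the explicit expression for $\Phi^*\alpha$ and the asymptotic estimates of the last section). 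In that formulation the area and geodesic-curvature divergences never appear individually, so no cancellation of infinities has to be engineered. To salvage your approach you would need to supply the asymptotic expansions of both kernels to the order at which the finite parts appear, which is essentially equivalent in difficulty to redoing the paper's analysis without its organizing device.
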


\begin{remark}The most interesting term in \eqref{eqmain} is the last one, which we call the {\em ideal defect} of $S$. It defines a functional for plane curves which is invariant under the action of the M\"obius group. In fact, the form $dx dy/\|y-x\|^2$, as well as $\theta(x,y)$, is invariant under M\"obius transformations. 
Similar expressions for space curves appear often in the study of conformally invariant knot energies (cf.\cite{ohara}).

The first term in the right hand side of \eqref{eqmain} is positive, and can be considered as a `truncated area' of $S$, in view of \eqref{crofton}. We call this term the {\em measure of non-trivial geodesics of $S$}. From Proposition \ref{p3}, it will be clear that it is a natural functional of $S$.
\end{remark}

The idea of the proof is roughly the following. We pull-back $d\ell$ to the space of point pairs of $S$. Integration gives the measure of non-trivial geodesics. Applying Stokes' theorem yields then the result. This procedure was already used  by Pohl in the euclidean setting in \cite{pohl}, but here we use a different `primitive' of  $d\ell$. This leads to a somehow dual construction, where the total curvature instead of the area appears. This dual approach is not possible in euclidean space.

\medskip
From Theorem \ref{main} one gets easily a formula for a general
surface with cone-like ends.

\begin{coro}
Let $S\looparrowright \mathbb H^3$ be a $\mathcal C^2$-immersed complete surface with
cone-like ends $C_1,\ldots , C_n$, the curves $C_i$ being
simple and closed. Then
\begin{multline*}
\int_S
KdS=2\pi(\chi(S)-n)+\frac{1}{\pi}\int_{\mathcal{L}}(\#(\ell\cap
S)-\sum_{i=1}^n\lambda^2(\ell,C_i))d\ell\\-\frac{1}{\pi}\sum_{i=1}^n\int_{C_i\times
C_i}\theta\sin\theta\frac{dxdy}{\|y-x\|^2},
\end{multline*}
and the previous integrals are absolutely convergent.
\end{coro}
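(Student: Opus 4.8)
The plan is to deduce the corollary from Theorem~\ref{main} by a cutting-and-gluing argument that reduces the general surface to a disjoint union of simply connected pieces, one per end. First I would cut $S$ along a system of disjoint embedded arcs and loops based on the boundary components $C_1,\dots,C_n$, chosen so that the resulting surface $S'$ is a disjoint union $S'_1\sqcup\dots\sqcup S'_n$ where each $S'_i$ is simply connected and has exactly one cone-like end $C_i$. Concretely, since $\overline S$ is a compact surface with $n$ boundary circles and Euler characteristic $\chi(S)$, one can first cut along $2g+(n-1)$ arcs (where $g$ is the genus) to reduce $\overline S$ to a disk, then reassemble so that each piece retains one boundary circle; the count of cuts is dictated by the fact that $\chi$ of a disk is $1$ while $\chi(S)=2-2g-n$, so the total Euler-characteristic deficit to be corrected is exactly $n-\chi(S)$. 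Applying Theorem~\ref{main} to each $S'_i$ gives
\begin{equation*}
\int_{S'_i} K\,dS=\frac{1}{\pi}\int_{\mathcal L}\bigl(\#(\ell\cap S'_i)-\lambda^2(\ell,C_i)\bigr)\,d\ell-\frac{1}{\pi}\int_{C_i\times C_i}\theta\sin\theta\,\frac{dx\,dy}{\|y-x\|^2},
\end{equation*}
and summing over $i$ yields the ideal-defect sum and, since $\sum_i \#(\ell\cap S'_i)=\#(\ell\cap S)$ for almost every $\ell$ (the cuts have measure zero in $\mathcal L$), the stated Crofton-type term with $\sum_i\lambda^2(\ell,C_i)$.

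The remaining task is to track how $\int_S K\,dS$ and the topological constant change under the cutting. The key point is that the extrinsic curvature integral is unaffected: cutting along arcs does not remove any area, so $\sum_i\int_{S'_i}K\,dS=\int_S K\,dS$. Meanwhile each elementary cut either splits one boundary circle into two or merges two components; bookkeeping the Euler characteristics shows $\sum_i\chi(S'_i)=\sum_i 1=n$ while $\chi(S)=2-2g-n$, so the discrepancy $\sum_i 2\pi\chi(S'_i)-2\pi\chi(S)=2\pi(n-\chi(S))$ — wait, one must be careful with signs: rewriting, $2\pi n - 2\pi\chi(S)=2\pi(n-\chi(S))$ is precisely the correction term $2\pi(\chi(S)-n)$ with a sign, so in fact the formula for $S$ should read with $2\pi(\chi(S)-n)$ added, matching the corollary. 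I would double-check this sign by testing against a known case, e.g. a totally geodesic half-plane ($\chi=1$, $n=1$, all three right-hand terms vanishing), and against a hemisphere-type example.

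An alternative, and perhaps cleaner, route avoids explicit cutting: one notes that both sides of the claimed identity are "additive" in a suitable sense, and one can instead run the proof of Theorem~\ref{main} directly for general $S$, where the Stokes-theorem step now picks up a contribution $2\pi\chi(S)$ from the interior topology (via the intrinsic Gauss–Bonnet applied to an exhausting sequence of compact subsurfaces, using that the Gaussian curvature is $K-1$ and that the $-1$ part is absorbed into the Crofton term) together with the $-2\pi n$ coming from the $n$ boundary loops at infinity. I expect the main obstacle to be the sign and normalization bookkeeping — reconciling the orientation conventions in the linking number $\lambda$, the invariant measure $d\ell$, and the Euler-characteristic count of the cuts — rather than any substantial new analysis, since the convergence of all integrals is already guaranteed by Theorem~\ref{main} applied piecewise. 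I would therefore present the cutting argument as the main proof and relegate the sign verification to a remark comparing with \eqref{motivacio}.
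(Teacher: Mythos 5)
Your cutting argument has a genuine gap: Theorem~\ref{main} applies only to a \emph{complete} simply connected \emph{embedded} surface whose entire ideal boundary is a single cone-like end $C$. The pieces $S'_i$ you produce by cutting $\overline S$ along arcs and loops are not of this type --- the cut locus becomes finite boundary lying in the interior of $\mathbb H^3$, so each $S'_i$ is a surface with boundary, and the displayed formula you write for $\int_{S'_i}K\,dS$ (Theorem~\ref{main} verbatim, with no boundary contribution) cannot hold; compare \eqref{motivacio}, where a nonempty boundary forces a $-\int k_g\,ds$ term. This is also where your Euler-characteristic bookkeeping breaks down: Theorem~\ref{main} carries no topological constant (it is the case $\chi=1$, $n=1$), so summing it over the pieces yields constant $0$, and the required term $2\pi(\chi(S)-n)$ simply does not appear from anywhere in your computation --- the hesitation in your sign discussion reflects this. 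You also do not address the fact that the corollary allows immersed $S$ while Theorem~\ref{main} requires embedded.

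The paper's proof supplies exactly the missing mechanism. It separates a compact core $R=S\cap K$ from $n$ embedded cylindrical ends $S_i$, applies the \emph{classical} Gauss--Bonnet formula \eqref{motivacio} together with Crofton \eqref{crofton} to $R$ (producing $2\pi\chi(R)=2\pi\chi(S)$ and a geodesic curvature integral over $\partial R$), caps each end with a disk $R_i$ so that $T_i=R_i\cup S_i$ is complete, embedded and simply connected, applies Theorem~\ref{main} to $T_i$ and \eqref{motivacio} to $R_i$ (producing $2\pi\chi(R_i)=2\pi$ per end, whence the $-2\pi n$), and observes that the geodesic curvature integrals along $\partial R=\sqcup_i\partial R_i$ cancel in pairs because $R$ and $R_i$ induce the same tangent planes and the same inward normal along that circle. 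It is this cancellation of boundary terms, which your proposal never confronts, that makes the constant $2\pi(\chi(S)-n)$ emerge. Your alternative suggestion of rerunning the proof of Theorem~\ref{main} for general $S$ also fails as stated, since Proposition~\ref{P4} (triviality of $\Phi^*(\pi)$ and the existence of the lift $g$) uses simple connectedness of $S$ in an essential way.
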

\begin{proof}
Take a compact set $K\subset\mathbb H^3$ with $\mathcal C^2$ boundary $\partial K$
transverse to $S$, and such that $S\setminus K=S_1\cup \ldots\cup
S_n$, where each $S_i$ is an embedded topological cylinder over $C_i$. Applying \eqref{motivacio} and \eqref{crofton} to 
$R=S\cap K$ yields
\begin{equation}\label{erra}
\int_R KdR=2\pi\chi(R)-\int_{\partial R}
k_g(s)ds+\frac{1}{\pi}\int_{\mathcal L}\#(\ell\cap R)d\ell
\end{equation}
where $k_g$ is the geodesic curvature in $R$.

Let $R_i$ be a compact surface with boundary such that $T_i=R_i\cup
S_i$ is a complete embedded simply connected surface. Combining again \eqref{motivacio} and \eqref{crofton}, gives
\begin{equation}
\label{ri} \int_{R_i} KdR_i=2\pi-\int_{\partial R_i}
k_g(s)ds+\frac{1}{\pi}\int_{\mathcal L}\#(\ell\cap R_i)d\ell.
\end{equation}
Applying Theorem \ref{main} to each $T_i$, and comparing with
\eqref{ri} yields
\begin{eqnarray}
\label{si} \int_{S_i}
KdS_i=-2\pi+\frac{1}{\pi}\int_{\mathcal{L}}(\#(\ell\cap
S_i)-\lambda^2(\ell,C_i))d\ell\\-\frac{1}{\pi}\int_{C_i\times
C_i}\theta\sin\theta\frac{dxdy}{\|y-x\|^2}+\int_{\partial
R_i}k_g(s)ds.\notag
\end{eqnarray}
Addition of \eqref{erra} and \eqref{si} finishes the proof.
\end{proof}

\subsection{The ideal defect}\label{defecte}
The last term in \eqref{eqmain}, which we call the {\em ideal defect}, can also be described as an integral in the space of point pairs of $\partial_\infty \mathbb H^3\equiv \mathbb R^2$, with respect to the M\"obius invariant measure on this space.

\begin{proposition}\label{nt}
 Let $\Omega\subset \mathbb R^2$ be a compact domain bounded by a simple closed curve $C$ of class $\mathcal C^2$. Then
\[
 \int_{C\times C}\theta\sin\theta\frac{dxdy}{\|y-x\|^2}=4\int_{NT(\Omega)}\frac{dzdw}{\|z-w\|^4}
\]
where $NT(\Omega)\subset\Omega\times\Omega$ is the set of point pairs $(z,w)$ such that  any circle $\xi\subset\mathbb R^2$ containing $z$ and $w$ intersects $\mathbb R^2\setminus \Omega$ (i.e. $z,w\in \xi\Rightarrow \xi\not\subset\Omega$.)
\end{proposition}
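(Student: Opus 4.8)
The plan is to realise both sides as the total mass of a single Möbius–invariant density on an auxiliary incidence manifold, and to compute its two push–forwards. Let $\mathcal{M}$ be the set of quadruples $(x,y,z,w)$ with $x,y\in C$ and $z,w$ lying on the oriented circle $\xi(x,y)\subset\partial_\infty\mathbb{H}^3$ through $x$ that is positively tangent to $C$ at $y$, subject to the condition that $z$ and $w$ lie on the arc of $\xi(x,y)$ bounded by $x$ and $y$ which leaves $x$ into $\Omega$ and remains in $\overline{\Omega}$ (the precise domain of $\mathcal{M}$ is pinned down in Step~1). This is a $4$–manifold carrying two projections: $\pi_{1}(x,y,z,w)=(x,y)$ onto $C\times C$, whose fibres are $2$–dimensional (pairs of points on the selected arc), and $\pi_{2}(x,y,z,w)=(z,w)$ onto $\Omega\times\Omega$, which is a local diffeomorphism at generic points. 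Set $\omega:=\pi_{2}^{*}\bigl(\tfrac{dz\,dw}{\|z-w\|^{4}}\bigr)$, a genuine $4$–form on $\mathcal{M}$, Möbius–invariant because both $\tfrac{dz\,dw}{\|z-w\|^{4}}$ and the construction of $\mathcal{M}$ are. The proposition then follows from $\int_{\mathcal{M}}\omega=\int_{C\times C}\pi_{1*}\omega$ together with the two claims $\pi_{2*}\omega=4\cdot\mathbf{1}_{NT(\Omega)}\cdot\tfrac{dz\,dw}{\|z-w\|^{4}}$ and $\pi_{1*}\omega=\theta\sin\theta\,\tfrac{dx\,dy}{\|y-x\|^{2}}$.

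Step~1 (the right–hand side). For a fixed $(z,w)\in\Omega\times\Omega$, the circles through $z$ and $w$ form a pencil $\mathcal{P}_{z,w}$, which a Möbius map sending $z,w$ to $0,\infty$ identifies with the pencil of lines through the origin. One checks that $(z,w)\in NT(\Omega)$ exactly when the set of circles of $\mathcal{P}_{z,w}$ meeting $\partial_\infty\mathbb{H}^3\setminus\Omega$ is an arc of length at least $\pi$ --- equivalently, $\partial_\infty\mathbb{H}^3\setminus\Omega$ ``subtends an angle $\ge\pi$'' at $\{z,w\}$ --- and that otherwise some circle of $\mathcal{P}_{z,w}$ lies in the open $\Omega$, in which case no point of $\mathcal{M}$ lies over $(z,w)$. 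A point of $\mathcal{M}$ over $(z,w)$ amounts to a circle of $\mathcal{P}_{z,w}$ tangent to $C$ at a point $y$, together with a transverse intersection point $x$ of that circle with $C$ singled out by the arc condition; tracking how $\#(\xi\cap C)$ jumps as $\xi$ runs through the pencil across its finitely many tangencies shows there are exactly two such data, becoming four once the order of $\{z,w\}$ is recorded. Hence $\pi_{2}$ has image $NT(\Omega)$ up to a null set and constant generic degree $4$, so $\pi_{2*}\omega=4\cdot\mathbf{1}_{NT(\Omega)}\cdot\tfrac{dz\,dw}{\|z-w\|^{4}}$ and $\int_{\mathcal{M}}\omega=4\int_{NT(\Omega)}\tfrac{dz\,dw}{\|z-w\|^{4}}$.

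Step~2 (the left–hand side) and conclusion. To compute $\pi_{1*}\omega$ one contracts $\omega=\pi_{2}^{*}(\tfrac{dz\,dw}{\|z-w\|^{4}})$ with horizontal lifts of $\partial_{x},\partial_{y}$ and integrates the resulting $2$–form over the fibre, i.e.\ over the selected arc of $\xi(x,y)$. Möbius invariance forces the answer to be $\varphi(\theta)\,\tfrac{dx\,dy}{\|y-x\|^{2}}$ for a universal function of the single invariant $\theta=\theta(x,y)$ of the configuration; evaluating $\varphi$ in the normalisation sending $x,y$ to $0,\infty$ --- where $\xi(x,y)$ is a straight line, $C$ is asymptotically parallel to a line at $\infty$ and meets $\xi(x,y)$ at angle $\theta$ at the origin, so that the arc contributes a factor governed by $\theta$ and the horizontal lift of $\partial_{x}$ contributes its transverse component $\sin\theta$ --- gives $\varphi(\theta)=\theta\sin\theta$. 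Then $\int_{\mathcal{M}}\omega=\int_{C\times C}\theta\sin\theta\,\tfrac{dx\,dy}{\|y-x\|^{2}}$, and comparison with Step~1 finishes the proof; absolute convergence is automatic, since $\theta\to0$ and the fibres collapse along the diagonal of $C\times C$, while $NT(\Omega)$ avoids the diagonal of $\Omega\times\Omega$ (any sufficiently close pair of points lies on a small circle contained in $\Omega$). The main obstacle is the global statement in Step~1 --- that $\pi_{2}$ has image exactly $NT(\Omega)$ with constant generic degree $4$ --- which requires controlling the interaction of $\mathcal{P}_{z,w}$ with a possibly very non-convex $\Omega$: ruling out degenerate tangency patterns, tracking intersection parities, and checking that the arc condition selects a unique crossing point $x$. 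The differential computation of Step~2 is, by comparison, a routine Möbius-invariant calculation once $\mathcal{M}$, $\pi_{1}$, $\pi_{2}$, and $\omega$ are correctly set up.
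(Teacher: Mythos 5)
Your route is genuinely different from the paper's (which applies Theorem \ref{main} to a sequence of convex $\mathcal C^2$ surfaces collapsing onto the boundary of the hyperbolic convex hull of $\Omega^c$, lets the total curvature term tend to zero, and identifies the surviving geodesic measure with $NT(\Omega)$ via the fact that a geodesic meets a convex body iff every geodesic plane containing it does). Unfortunately your central claim in Step~1 is not just unproven but false as stated. The fibre of $\pi_1$ over a pair $(x,y)$ with $x,y$ far apart consists of \emph{all} pairs $(z,w)$ on an arc that penetrates deep into $\Omega$; in particular it contains pairs $(z,w)$ with $\|z-w\|$ arbitrarily small located far from $C$. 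Such pairs are never in $NT(\Omega)$ (a small circle through them lies in $\Omega$), yet they are in $\pi_2(\mathcal M)$. So $\pi_2(\mathcal M)$ is strictly larger than $NT(\Omega)$ on a set of positive measure, and $\pi_{2*}\omega\neq 4\cdot\mathbf 1_{NT(\Omega)}\cdot\frac{dz\,dw}{\|z-w\|^4}$. Worse, the locus $\{z=w\}\subset\mathcal M$ is a hypersurface along which the Jacobian of $\pi_2$ vanishes only to first order while $\|z-w\|^{-4}$ blows up to fourth order; in arc-length fibre coordinates $\omega\sim|s-t|^{-3}\,dx\,dy\,ds\,dt$, so $\int_{\mathcal M}|\omega|=+\infty$ and the Fubini identity $\int_{\mathcal M}\omega=\int\pi_{1*}\omega=\int\pi_{2*}\omega$ on which the whole argument rests is unavailable. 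Your convergence remark does not address this locus (and the assertion that $NT(\Omega)$ avoids the diagonal is itself wrong near $C$: pairs with $\mathrm{dist}(z,C)\ll\|z-w\|^2$ do belong to $NT(\Omega)$, although the integral still converges there). To rescue the scheme you would need a \emph{signed} count in which the contributions over $(\Omega\times\Omega)\setminus NT(\Omega)$ cancel, together with a blow-up or principal-value argument along $\{z=w\}$ --- which is essentially the machinery the paper builds (the signed count $\#(\ell\cap S)-\lambda^2(\ell,C)$ of Proposition \ref{p3} and the chord-space compactification), and is not something one gets for free.

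Step~2 has an independent problem. The clause that the distinguished arc ``remains in $\overline\Omega$'' (which you need in Step~1 to have any hope of killing fibres over non-$NT$ pairs) makes the fibre of $\pi_1$ over $(x,y)$ depend on global data: for a non-convex $C$ the arc of $\xi(x,y)$ leaving $x$ into $\Omega$ may exit and re-enter $\Omega$, truncating or emptying the fibre. Then $\pi_{1*}\omega$ is not a universal function of the single M\"obius invariant $\theta(x,y)$, and the identity $\varphi(\theta)=\theta\sin\theta$ cannot be established by evaluating one normalized configuration. Even granting universality, the evaluation is asserted rather than computed, and the fibre integral defining $\pi_{1*}\omega$ carries the same $|s-t|^{-3}$ singularity noted above, so its finiteness must be proved before its value can be discussed. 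In short: the double-fibration idea is attractive and in the spirit of the conformal-geometric computations of Langevin--O'Hara, but as written both pushforwards fail, and the proof of the proposition is not established.
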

\begin{proof}
 Let $Q\subset\mathbb H^3$ be the convex hull of $\Omega^c=\partial_\infty\mathbb H^3\setminus \Omega$; i.e. $Q$ is the minimal convex set containing $\Omega^c$. Using the Klein model, $Q$ can be seen as the euclidean convex hull of $\Omega^c$. Let us consider the boundary $S=\partial Q\subset\mathbb H^3$, which is a surface of class $\mathcal C^1$. Next we construct a sequence of convex sets $Q_n\subset\mathbb H^3$ such that: $Q_n\supset Q_{n+1}$, $Q=\cap_{n=1}^\infty Q_n$, and  $S_n=\partial Q_n$ is a  $\mathcal C^2$ surface with cone-like end $C$.  First, let $X\in\mathfrak X(\mathbb R^3)$ be a vector field in the Klein model such that $X$  vanishes only at $C$, and $X|_\Omega$ points to the interior of the model. Then, for small $t>0$, the flow $\varphi_t$ brings $\Omega$ to a surface $\varphi_t(\Omega)$ with a cone-like end on $C$, and bounding a convex domain $D$. On the other hand, let $Q$ be approximated by a decreasing sequence $Q_n'\subset\mathbb R^3$ of euclidean convex sets with boundary of class $\mathcal C^2$ (cf. \cite{schneider}). 
Then, smoothening the corners of $D\cap Q_n'$ yields the desired sequence.

By Theorem \ref{main}
\[
 \int_{S_n}Kd S_n= \frac{1}{\pi}\int_{\mathcal L}(\#(\ell\cap S_n)-\lambda^2(\ell,C))d\ell-\frac{1
}{\pi}\int_{C\times C}\theta\sin\theta\frac{dxdy}{\|y-x\|^2}.
\]
Using, for instance, the arguments in \cite{crasp}, one can show
\[
 \lim_n \int_{S_n}KdS_n=0.
\]
On the other hand, by monotone convergence, 
\[
 \lim_n \int_{\mathcal L}(\#(\ell\cap S_n)-\lambda^2(\ell,C))d\ell=\int_{\mathcal L}(\#(\ell\cap S)-\lambda^2(\ell,C))d\ell.
\]
Hence,
\[
 \int_{C\times C}\theta\sin\theta\frac{dxdy}{\|y-x\|^2}=\int_{\mathcal L}(\#(\ell\cap S)-\lambda^2(\ell,C))d\ell.
\]
The right hand side above is the measure of geodesics intersecting $Q$ but not $\Omega$. We determine each geodesic $\ell\in\mathcal L$ by its ideal endpoints $(z,w)$. This allows to express $d\ell$ as in \eqref{dzdw}.
Finally, we just need to note that a geodesic $\ell$ intersects the convex hull $Q$ if and only if every geodesic 2-plane containing $\ell$ intersects  $\Omega$. 
\end{proof}

\subsection{Integral of the inverse of the chord}\label{inversa}
Next we express the ideal defect in an alternative way which is not invariant, but still interesting.  Let $C\subset \partial_\infty\mathbb H^3$ be a $\mathcal C^2$-differentiable simple closed 
curve, and consider
$S=C\times(0,\infty)\subset\mathbb H^3$. We may think of $S$ as a
surface with one end by closing the top end at infinity with an
infinitesimally small surface. Then, the total curvature of $S$ equals $2\pi$, and Theorem \ref{main} applied to
$S$ yields
\[
2\pi+\frac{1}{\pi}\int_{C\times C}\theta\sin\theta
\frac{dxdy}{\|y-x\|^2}=\frac{2}{\pi}\int_{\mathbb R^2\times\mathbb
R^2}(\#(\overline{zw}\cap C)-\lambda^2(z,w;C))\frac{dzdw}{\|w-z\|^4}
\]\begin{equation}\label{exemple}
=\frac{2}{\pi}\int_{A(2,1)}\sum_{x, y\in L\cap
C}\frac{(-1)^{\#(\overline{xy}\cap C)}}{\|y-x\|}dL
\end{equation}
where $\overline{zw}$ denotes the line segment joining $z,w\in \mathbb R^2$, and $dL$ is the invariant measure on the space $A(2,1)$ of
(unoriented) lines of $\mathbb R^2$, normalized as in \cite{santalo}.
The first equality uses \eqref{dzdw}. The second equality is Proposition \ref{lemaconv}. 

As a consequence, the integral in \eqref{exemple} is invariant under M\"{o}bius transformations, which was
\emph{a priori} not obvious. In fact, if $C$ bounds a convex domain
$\Omega$, then \eqref{exemple} is
\begin{equation}\frac{4}{\pi}\int_{A(2,1)}\frac{1}{\sigma(L\cap\Omega)}dL
\label{corda}
\end{equation}
where $\sigma(L\cap\Omega)$ is the chord length. The previous
functional \eqref{corda} is one of the so-called Franklin invariants
of convex sets, defined by Santal\'{o} in \cite{santalo.franklin} as a
generalization of a functional introduced by Franklin with motivations from
stereology (cf.~\cite{franklin}). These functionals had the nice
property of being invariant by dilatations. For instance, the integral \eqref{corda} could in principle be used to estimate, by means of line sections, the number of particles in a plane region, if these particles have the same shape but possibly different size.

An immediate consequence
of our results is that \eqref{corda} is in fact invariant under the 
M\"{o}bius group. An interesting question is to determine which of the
Franklin functionals enjoy this bigger invariance. Besides, it was
conjectured that the Franklin invariants are minimal for balls
(cf.~\cite{franklin} and \cite{santalo.franklin}). This was shown by
Franklin among ellipsoids while Santal\'{o} obtained some general non-sharp
inequalities. As a consequence of our results, we can prove this
conjecture in the planar case.
\begin{coro}
For a convex set $\Omega\subset\mathbb R^2$ we have
\begin{equation}
\label{conjectura}\int_{A(2,1)}\frac{1}{\sigma(L\cap \Omega)}dL\geq
\frac{\pi^2}{2}
\end{equation}
where $\sigma$ is the length of the chord, and $A(2,1)$ is the space
of lines. Equality holds in \eqref{conjectura} if
and only if $\Omega$ is a round disk. Moreover, the left hand side
of \eqref{conjectura} is invariant by M\"{o}bius transformations (keeping $\Omega$ convex).
\end{coro}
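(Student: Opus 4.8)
The plan is to read the statement off the chain of identities already in hand. Assume first that $\partial\Omega$ is of class $\mathcal C^2$, so that Theorem \ref{main} applies to the cylinder over $C=\partial\Omega$ and the preceding Proposition applies to $C$. Combining \eqref{exemple} with \eqref{corda} gives
\[
\int_{A(2,1)}\frac{1}{\sigma(L\cap\Omega)}\,dL=\frac{\pi}{4}\left(2\pi+\frac1\pi\int_{C\times C}\theta\sin\theta\,\frac{dxdy}{\|y-x\|^2}\right),
\]
and substituting the formula for the ideal defect from the Proposition,
\[
\int_{A(2,1)}\frac{1}{\sigma(L\cap\Omega)}\,dL=\frac{\pi^2}{2}+\int_{NT(\Omega)}\frac{dzdw}{\|z-w\|^4}.
\]
Since the last integrand is positive, \eqref{conjectura} follows immediately. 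The M\"obius invariance is equally soft: by \eqref{corda} the left-hand side of \eqref{conjectura} equals $\pi/4$ times the integral in \eqref{exemple}, which was observed to be invariant under the M\"obius group, so the functional is unchanged whenever a M\"obius transformation carries $\Omega$ to another (bounded) convex set.

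To remove the smoothness assumption I would approximate a general convex body $\Omega$ from outside by bounded convex sets $\Omega_n$ with $\mathcal C^2$ boundary and $\bigcap_n\Omega_n=\overline{\Omega}$. For almost every line $L$ one has $\sigma(L\cap\Omega_n)\downarrow\sigma(L\cap\Omega)$, hence $1/\sigma(L\cap\Omega_n)\uparrow 1/\sigma(L\cap\Omega)$, and monotone convergence transports the bound $\ge\pi^2/2$ to the limit; likewise $NT(\Omega_n)\uparrow NT(\Omega)$ off a null set, so the displayed identity persists for every bounded convex $\Omega$. (This last point is the one mildly technical step: a generalized circle through a pair $(z,w)$ may meet $\Omega^c$ only in a single tangency with $\partial\Omega$ and then fail to meet $\Omega_n^c$; but the pairs for which this can happen form a null set.)

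It remains to identify the equality case, which is where an actual argument is needed: equality holds in \eqref{conjectura} iff $NT(\Omega)$ is Lebesgue-null, and I claim this happens exactly when $\Omega$ is a round disk. If $\Omega$ is a disk, then $NT(\Omega)=\emptyset$: viewing $\Omega$ as the Poincar\'e disk model of $\mathbb{H}^2$, any two points of $\Omega$ lie on a common hyperbolic circle, which is a Euclidean circle contained in $\Omega$, so the pair is not in $NT(\Omega)$. Conversely, suppose $\Omega$ is not a disk; let $r$ be its inradius and $B(c,r)\subseteq\Omega$ a maximal inscribed disk. Since $\Omega\neq B(c,r)$ there is $p\in\partial\Omega$ with $\|p-c\|>r$, and then the point $q=c-r(p-c)/\|p-c\|$ of $\partial B(c,r)$ satisfies $\|p-q\|=\|p-c\|+r>2r$. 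Choosing $z,w$ in the interior of $\Omega$ near $p$ and $q$ with $\|z-w\|>2r$, no disk contained in $\Omega$ — each has diameter at most $2r$ — can contain both $z$ and $w$, and since $\Omega$ is bounded no line lies in $\Omega$ either; hence \emph{every} generalized circle through $z$ and $w$ meets $\Omega^c$, i.e. $(z,w)\in NT(\Omega)$. As this is an open condition, $NT(\Omega)$ has positive measure and \eqref{conjectura} is strict. The only genuine content here is the elementary inradius-versus-diameter observation; the remaining (minor) obstacle is making the approximation step of the previous paragraph precise.
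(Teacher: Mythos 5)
Your proof is correct in substance and starts from the same identity as the paper, namely
\[
\int_{A(2,1)}\frac{dL}{\sigma(L\cap\Omega)}=\frac{\pi^2}{2}+\frac14\int_{C\times C}\theta\sin\theta\,\frac{dx\,dy}{\|y-x\|^2},
\]
but from there the two arguments diverge. The paper gets nonnegativity of the ideal defect pointwise: for a convex curve one has $-\pi<\theta<\pi$, hence $\theta\sin\theta\geq 0$, with equality in \eqref{conjectura} iff $\theta\equiv 0$ (the identification of this case with the circle is left implicit). You instead route through the Proposition expressing the ideal defect as $4\int_{NT(\Omega)}\|z-w\|^{-4}\,dz\,dw$, which is manifestly nonnegative, and then characterize when $NT(\Omega)$ is null. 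Your route is heavier machinery (it imports the convex-hull limit argument behind that Proposition), but it buys a complete and genuinely geometric treatment of the equality case: the observation that any two points of a disk lie on a hyperbolic circle of the Poincar\'e model, and the inradius argument producing interior points at distance greater than $2r$ which no circle contained in $\Omega$ can join, are both correct and go beyond what the paper writes down.

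One concrete caveat concerns the approximation step for non-smooth $\Omega$, which the paper silently avoids by (implicitly) assuming $C$ of class $\mathcal C^2$. Your monotone convergence claim does not transport the bound in the needed direction: for $\Omega_n\downarrow\overline\Omega$ the functions $1/\sigma(L\cap\Omega_n)$ increase monotonically only on lines meeting $\mathrm{int}\,\Omega$; on the lines meeting $\Omega_n$ but missing $\overline\Omega$ the integrand is positive and eventually drops to zero, so it is not monotone, and Fatou-type reasoning only yields $\int 1/\sigma(L\cap\Omega)\,dL\leq\liminf_n\int 1/\sigma(L\cap\Omega_n)\,dL$ --- the useless inequality. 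To close this you must additionally show that the contribution of the lines meeting $\Omega_n\setminus\overline\Omega$ only tends to zero (true for, say, smoothed outer parallel bodies, by bounding the chord from below using an inball of $\Omega$, but it requires an argument); the analogous care is needed for the persistence of the $NT(\Omega)$ identity used in the equality case. Since the paper itself does not treat non-smooth bodies, this is a gap you share with, rather than introduce relative to, the original.
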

\begin{proof}By \eqref{exemple} we have
\[
\frac{4}{\pi}\int_{A(2,1)}\frac{1}{\sigma(L\cap
\Omega)}dL=2\pi+\frac{1}{\pi}\int_{C\times
C}\theta\sin\theta\frac{dxdy}{\|y-x\|^2}\geq 2\pi,
\]
and the equality occurs if and only if $\theta\equiv 0$. Indeed,
since $C$ is convex it is easy to see that $-\pi<\theta<\pi$.
\end{proof}

The author wishes to thank W.K\"{u}hnel, R.Langevin, J.O'Hara and
E.Teufel for valuable discussions during the preparation of this
work. It is a pleasure to thank the anonymous referee for useful suggestions that have led to a substantial simplification of the paper.

\section{The space of geodesics}\label{geodesics}
Let $\mathcal F=\{(x;g_1,g_2, g_3)\}$ be the bundle of positive
orthonormal frames of $\mathbb H^3$; i.e., each $(g_i)_{i=1,2,3}$ is
a positive orthonormal basis of $T_x\mathbb H^3$.  We consider on
$\mathcal F$ the dual and  connection forms
\[
\omega_i=\langle dx,g_i\rangle, \qquad \omega_{ij}=\langle \nabla
g_i,g_j\rangle,
\]
where $\langle\,,\,\rangle$ denotes the (hyperbolic) metric in
$\mathbb H^3$, and $\nabla$ is the corresponding riemannian
connection. The structure equations read
\begin{equation}
\label{estructura}d\omega_i=\omega_j\wedge\omega_{ji},\qquad
d\omega_{ij}=\omega_i\wedge\omega_j+\omega_{ik}\wedge\omega_{kj}.
\end{equation}

Let $\mathcal L^+$ be the space of oriented geodesics of $\mathbb
H^3$. Clearly $\mathcal L^+$ is a double cover of $\mathcal L$. Consider $\pi_1:\mathcal F\rightarrow \mathcal L^+$
given by $\pi_1(x;g_1,g_2,g_3)=\ell$ with $x\in \ell$,
and $g_1\in T_x\ell$ pointing in the positive direction. The space $\mathcal L^+$ can be endowed with a differentiable structure such that
$\pi_1$ is a smooth submersion. Moreover, $\mathcal L^+$
admits a volume form $d\ell$ invariant under isometries of $\mathbb
H^3$, which is unique up to normalization, and characterized by (cf.\cite{santalo})
\begin{equation}\label{mesura}
\pi_1^*(d\ell)=\omega_{2}\wedge\omega_{12}\wedge\omega_3\wedge\omega_{13}.
\end{equation}
Similarly, one can consider $\mathcal L_2$, the space of (unoriented)
totally geodesic surfaces (geodesic planes) of $\mathbb H^3$. We
will use the space  of {\em flags}
\[
\mathcal L_{1,2}=\{(\ell,\wp)\in\mathcal
L^+\times\mathcal L_2|\ell\subset\wp\},
\]
and the canonical
projection $\pi\colon\mathcal L_{1,2}\rightarrow \mathcal L^+$
which makes $\mathcal L_{1,2}$ a principal $\mathbb S^1$-bundle over
$\mathcal L^+$. Let us project $\pi_{1,2}\colon\mathcal F\rightarrow \mathcal
L_{1,2}$ so that $\pi_{1,2}(x;g_i)=(\ell,\wp)$ with
$\wp\supset\ell=\pi_1(x;g_i)$ and $g_3\bot T_x\wp$. Then $\omega_{23}=\pi_{1,2}^*\varphi$ for a certain form
$\varphi\in\Omega^1(\mathcal L_{1,2})$, which is an invariant global angular form
 (or connection) of the bundle $\pi$.

\begin{proposition}\label{p1}
There exists a unique $2$-form $\alpha\in\Omega^2(\mathcal{L}^+)$ such
that
\[
\pi^*(\alpha)=d\varphi\in\Omega^2(\mathcal L_{1,2}).
\] where $\varphi$ is the global angular form of $\pi$. Moreover $\alpha\wedge\alpha=2d\ell$, so that $\alpha$ is an invariant symplectic form on $\mathcal L^+$.\end{proposition}
\begin{proof}Assuming $\alpha$ exists, structure equations \eqref{estructura} give
\begin{equation}\label{alpha}
\pi_1^*(\alpha)=d\omega_{23}=\omega_2\wedge\omega_3-\omega_{12}\wedge\omega_{13},
\end{equation}
whence
\[
\pi_1^*(\alpha\wedge\alpha)=-2\omega_2\wedge\omega_3\wedge\omega_{12}\wedge\omega_{13}=2\pi_1^*(d\ell).
\]
Therefore  $\alpha\wedge\alpha=2d\ell$ (as $d\pi_1$ is exhaustive).

Let $X\in\mathfrak X(\mathcal L_{1,2})$ be the tangent vector field along
the fibers of $\pi$ such that $\varphi(X)=1$. By \eqref{alpha}, for any $\widetilde X\in\mathfrak X(\mathcal F)$ such that $d\pi_{1,2}\widetilde X=X$,
\[
 \pi_{1,2}^*(i_Xd\varphi)=i_{\widetilde X}d\omega_{23}=0,
\]
whence $i_Xd\varphi=0$. Then $L_X\varphi=0$, and
\[
L_Xd\varphi=d L_X\varphi=0.
\]Hence, $d\varphi$ is constant along the fibers of $\pi$, and null
on their tangent vectors, which shows the existence of $\alpha$. The uniqueness follows from the injectivity of $\pi^*$.
\end{proof}

It follows from the previous proposition that 
\begin{equation}\label{primitiva}
d( \pi^*\alpha\wedge
\varphi)=2\cdot \pi^*(d\ell)
\end{equation}
This will be used in Section \ref{proof} to prove Theorem \ref{main} by means of Stokes' theorem.

\begin{remark}The forms $\varphi,\alpha$ are in some sense dual to the forms $\omega_1,dI$ used in \cite{pohl}. In fact, many of the subsequent constructions are parallel to those of \cite{pohl}. However, choosing $\alpha$ leads us to results involving the total curvature, while $dI$ made the area appear. This choice could not be done in the euclidean setting since there $\alpha\wedge\alpha$ vanishes.\end{remark}

The following notation will be used throughout the paper: $$A\disjunt B:=\{(x,y)\in A\times B\ |\ x\neq y\}.$$
In the Poincaré model, by considering the ideal endpoints $z,w$ of each geodesic $\ell$, one identifies (a full-measure subset of) $\mathcal{L}^+$ with $\mathbb {R}^2\disjunt\mathbb R^2$. Then, an elementary computation with moving frames (cf.\eqref{connexio}) gives the following expression for $\alpha$ at a point $(z,w)\equiv\ell\in\mathcal L^+$:
\begin{equation}\label{alpha1}
\alpha=\frac{2}{\|w-z\|^2} ( dz_1\wedge dw_2+dz_2\wedge dw_1)
\end{equation}
where the coordinate system of $\mathbb R^2$ has been chosen in such a way that $z_2=w_2=0$ and $w_1=-z_1$. In particular, if $z=z(x)$ and $w=w(y)$ are curves parametrized by arc-length, then
\begin{equation}\label{alpha2}
 \alpha=2\sin\theta(x,y)\  \frac{dx\wedge dy}{\|w(y)-z(x)\|^2}
\end{equation}
where $\theta(x,y)$ is the oriented angle between the two oriented circles through $z(x),w(y)$, tangent to $z'(x)$ and $w'(y)$ respectively.

Using \eqref{alpha1} we can also obtain an expression for the measure of geodesics. Indeed,
\begin{equation}\label{dzdw}
 d\ell=\frac{1}{2}\alpha\wedge\alpha=4\frac{dz\wedge dw}{\|w-z\|^4}
\end{equation}
where $dz,dw$ denote the area elements of the ideal endpoints $z,w$ in $\mathbb R^2$.
\begin{remark}The following complex valued two form in $\mathbb C\disjunt\mathbb
C$ was introduced by Langevin and O'Hara in
\cite{ohara} under the name  \emph{infinitesimal cross-ratio}
\[
\omega_{cr}=\frac{d(z_1+iz_2)\wedge d(w_1+iw_2)}{(w-z)^2},\qquad (z,w)\in\mathbb C\disjunt\mathbb C.
\]
This form $\omega_{cr}$ is invariant under the diagonal action of the Möbius group $Sl(2,\mathbb C)$. Using this fact, one checks easily  that $-\alpha/2$ coincides with $\mathfrak{Im}(\omega_{cr})$, the imaginary part of the infinitesimal cross-ratio.
\end{remark}

\medskip
We end the section by showing that  the measure of non-trivial geodesics is a natural quantity. This fact was already noticed in the euclidean setting by Pohl (cf. \cite{pohl}, equation (6.5)).
\begin{proposition}\label{p3}
Let $S\subset\mathbb H^3$ be an embedded surface with cone-like ends $C\subset\partial_\infty\mathbb H^3$. Let $\Phi: S\disjunt S\rightarrow \mathcal L^+$ be such that $\Phi(x,y)$ is the oriented geodesic going first
through $x$ and then through $y$. Then
\[
\int_{S\disjunt S}\Phi^*(d\ell)=\frac{1}{2}\int_{\mathcal{L}^+}(\#(\ell\cap
S)-\lambda^2(\ell,C))d\ell.
\]
\end{proposition}
\begin{proof}By the coarea
formula
\[
\int_{S\disjunt S}\Phi^*(d\ell)=\int_{\mathcal L^+}\mu(\ell)d\ell
\]
where
\[
\mu(\ell)=\sum_{(x,y)\in\Phi^{-1}(\ell)}-\epsilon(x)\epsilon(y)
\]
being $\epsilon(u)$ the
sign at $u$ of the algebraic intersection $\ell\cdot S$. 
Now, let $p$ (resp. $q$) be the number of points of $\ell\cap S$
with $\epsilon=1$ (resp. $\epsilon=-1$), so that
\[
\#(\ell\cap S)=p+q,\qquad \lambda(\ell,C)=\ell\cdot S=p-q.
\]
Then $\Phi^{-1}(\ell)$ contains $(p(p-1)+q(q-1))/2$ pairs $(x,y)$ with $\epsilon(x)=\epsilon(y)$, and $pq$ elements with $\epsilon(x)=-\epsilon(y)$.
Therefore
$2\mu(\ell)=2pq-p(p-1)-q(q-1)=\#(\ell\cap S)-\lambda^2(\ell,C)$.
\end{proof}

\section{Convergence results}\label{convergencia}
Next we establish the convergence of the integrals appearing in Theorem \ref{main}. In the whole section, $S\subset \mathbb H^3$ will denote  a complete surface with a connected cone-like end $C\subset\partial_\infty \mathbb H^3$. Here $\mathbb H^3$ denotes the Poincar\'e half-space model. For $h>0$, we set
$S_h=\{x\in S|x_3\geq h\}$ which is a compact surface with 
boundary $C_h=\partial S_h=\{x\in S|x_3=h\}$.

\begin{proposition}\label{p7}If $K$ denotes the extrinsic curvature of
$S$, and $dS$ is the area element, then
\[
\int_SKdS
\]
is absolutely convergent.
\end{proposition}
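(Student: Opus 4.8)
The plan is to dominate $|K|\,dS$ on $S$ by an integrable quantity expressed in the half-space coordinates, and then deduce the stated limit from the dominated convergence theorem applied to the exhaustion $S_h\nearrow S$.

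First I would express both the curvature $K$ and the area element $dS$ in terms of the Euclidean data near the ideal boundary. Writing $dS = g(x)\,d\sigma_e(x)$, where $d\sigma_e$ is the Euclidean area element of $\overline S$ and $g$ accounts for the conformal factor $x_3^{-2}$, we have $g(x) = O(x_3^{-2})$ on the (compact) surface $\overline S$. For the curvature, recall $K = \det(\ii)$ where $\ii$ is the second fundamental form computed above; since $\|\ii(x)\|_\infty = O(x_3)$ by Lemma \ref{segonaff}, we get $|K(x)| = O(x_3^2)$. Combining, $|K|\,dS = O(x_3^2)\cdot O(x_3^{-2})\,d\sigma_e = O(1)\,d\sigma_e$, which is integrable over the compact Euclidean surface $\overline S$. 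Hence $\int_S |K|\,dS < \infty$, i.e. the integral is absolutely convergent.

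Once absolute convergence is established, the second assertion is immediate: the functions $\chi_{S_h}\,K$ are dominated in absolute value by the fixed integrable function $|K|$ on $S$, and $\chi_{S_h}\,K \to K$ pointwise as $h\to 0$ (each point of $S$ has positive third coordinate, so it lies in $S_h$ for all sufficiently small $h$). The dominated convergence theorem then gives $\lim_{h\to 0}\int_{S_h} K\,dS = \int_S K\,dS$.

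The main obstacle is the curvature estimate $|K(x)| = O(x_3^2)$, which rests on Lemma \ref{segonaff}: one must be careful that the bound $\|\ii\|_\infty = O(x_3)$ is genuinely uniform up to the ideal boundary, using that $C$ is a curvature line of $\overline S$ and the orthogonality condition $iii)$ in the definition of cone-like ends (without orthogonality the curvature would only tend to $\cos^2\beta \neq 0$, and the product $|K|\,dS$ would blow up like $x_3^{-2}$, destroying integrability). The conformal-factor bookkeeping for $dS$ and the passage to dominated convergence are then routine.
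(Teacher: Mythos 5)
Your proposal is correct and follows essentially the same route as the paper: the author likewise deduces $K(x)=O(x_3^2)$ from Lemma \ref{segonaff} and combines it with $dS=dS^e/x_3^2$ to dominate $|K|\,dS$ by a constant multiple of the Euclidean area element of the compact surface $\overline S$. Your additional remarks on the dominated convergence step and on the necessity of the orthogonality condition are consistent with (and slightly more explicit than) the paper's two-line argument.
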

\begin{proof}Let us consider the global orthonormal frame $e_i(x)=x_3\partial/\partial x_i$, $(i=1,2,3)$ defined for all $x\in\mathbb H^3$. The
connection forms $\theta_{ij}=\langle \nabla e_i,e_j\rangle$ are then
given by
\begin{equation}\label{connexio}
\theta_{i3}=\frac{dx_i}{x_3},\quad \theta_{ij}=0\qquad \mbox{for } i,j\neq 3.
\end{equation}
Let us fix now $y\in S$. After a change of coordinates, we can
assume $e_2(y)\in T_yS$. 
Let $v_1,v_2,v_3$ be a frame locally defined on $S$ (around $y$) so
that $v_2(y)=e_2(y)$, and $v_1(x),v_2(x)\in T_xS$. Then $v_i(x)=a_{ij}(x)e_j(x)$ for an
orthogonal matrix $(a_{ij}(x))\in \mathrm O(3)$. In particular
$v_1(y)=\cos\alpha e_1+\sin\alpha e_3$, and $v_3(y)=-\sin\alpha
e_1+\cos\alpha e_3$ for some $\alpha\in[0,2\pi)$. Then
$(\omega_{ij})_y=\langle \nabla v_i,v_j\rangle_y$ are given by
\begin{align}\notag
 (\omega_{12})_y&=\langle \nabla (a_{1i}e_i),e_2\rangle_y=da_{12}+a_{12}(y)\langle \nabla e_1,e_2\rangle +a_{32}(y)\langle \nabla e_3,e_2\rangle\\&=da_{12}+\cos\alpha\theta_{12}+\sin\alpha\theta_{32}\stackrel{\eqref{connexio}}{=}da_{12}-\sin\alpha\frac{dx_2}{y_3},\label{udos}
\end{align}
\begin{align*}
(\omega_{13})_y&=\langle\nabla (a_{1i}e_i),-\sin\alpha e_1+\cos\alpha e_3\rangle_y\\
&=-\sin\alpha(da_{11}+a_{1i}(y)\theta_{i1})+\cos\alpha(da_{13}+a_{1i}(y)\theta_{i3})\\
&=-\sin\alpha da_{11}+\cos\alpha da_{13}+\frac{dx_1}{y_3},%
\end{align*}and similarly
\begin{equation}\label{dostres}
(\omega_{23})_y=-\sin\alpha da_{21}+\cos\alpha
da_{23}+{\cos\alpha}\frac{dx_2}{y_3}.
\end{equation}
In particular 
\[
 \omega_{13}(v_1)=-\sin\alpha da_{11}(v_1)+\cos\alpha da_{13}(v_1)+\frac{dx_1(v_1)}{y_3}=O(y_3)+\frac{dx_1(v_1)}{y_3}.
\]
But $dx_1(v_1/y_3)=\cos\alpha=O(y_3)$. Indeed, $\cos\alpha=\langle e_1,v_1\rangle$ is a $\mathcal C^1$ function on $\overline S=S\cup C$ and vanishes at $C$.
One checks similarly that $\omega_{i3}(v_j)=O(y_3)$ for $i,j=1,2$.  We have thus that
\[
 K(y)=\det(\omega_{i3}(v_j)|i,j=1,2)=\omega_{13}(v_1)\omega_{23}(v_2)-\omega_{13}(v_2)\omega_{23}(v_1)=O(y_3^2)
\]
The result follows since  $y_3^2dS$  is the euclidean area element of $S$ in the model.
\end{proof}
The following proposition is a first step towards the existence of formula \eqref{eqmain}.

\begin{proposition}\label{primer}
 Let $S, R\subset \mathbb H^3$ be two surfaces with the same cone-like end $\partial_\infty S=\partial_\infty R\subset \partial_\infty\mathbb H^3$. Then
\[
\int_S K dS-\int_R K dR=2\pi(\chi(S)-\chi(R))+\lim_{h\to 0}\frac{1}{\pi}\int_{\mathcal L}(\#(\ell\cap S_h)-\#(\ell\cap R_h))d\ell. 
\]
\end{proposition}
\begin{proof}
 From \eqref{motivacio} and \eqref{crofton} one gets
\[
 \int_{S_h} K dS_h=2\pi\chi(S_h)+\frac{1}{\pi}\int_{\mathcal L}\#(\ell\cap S_h)d\ell-\int_{\partial S_h}k_g(s)ds
\]
and similarly for $R_h$. We must show that 
\[
 \int_{\partial S_h}k_g(s)ds-\int_{\partial R_h}k_g(s) ds
\]
tends to zero as $h\to 0$. By equation \eqref{udos} we have 
\[
k_g=-\omega_{12}(v_2)=-da_{12}(v_2)+\sin\alpha.\] 
In the previous proof we learned that $\cos\alpha=O(h)$, and thus $\sin\alpha=1+O(h^2)$. Besides, in the choice of the local frame $v_1,v_2,v_3$ one could further assume that $v_1$ is everywhere orthogonal to $e_2$. Hence $a_{12}=\langle v_1,e_2\rangle\equiv 0$, so $k_g=\sin\alpha=1+O(h^2)$, and
\[
 \int_{\partial S_h}(k_g(s)-1)ds=\int_{\partial S_h}O(h^2)ds=O(h),
\]
and similarly for $\partial R_h$. Thus, it suffices to show that the difference of (hyperbolic) lengths of $\partial S_h$ and $\partial R_h$ tends to zero as $h\to 0$. This follows from the fact that $\partial_\infty S$ is an euclidean geodesic of both $S$ and $R$, and geodesics are extremals of the length. Indeed, the euclidean lengths of $\partial S_h$ and $\partial R_h$ differ both from the length of $\partial_\infty S$ with an order $O(h^2)$. Hence, their respective hyperbolic lengths have a difference of order $O(h)$. 
\end{proof}

Next we study the  convergence  of the measure of non-trivial geodesics.

\begin{lemma}\label{p8}If $\lambda(\ell,C_h)$ denotes the linking number (defined up to sign)
of a geodesic $\ell$ with the  curve $C_h$,  then
\begin{equation}\label{mono}
\lim_{h\rightarrow 0}\int_{\mathcal{L}}(\#(\ell\cap
S_h)-\lambda^2(\ell,C_h))d\ell=\int_{\mathcal{L}}(\#(\ell\cap
S)-\lambda^2(\ell,C))d\ell
\end{equation}
where $\lambda(\ell,C)$ is the limit of $\lambda(\ell,C_h)$ when
$h\rightarrow 0$.
\end{lemma}

\begin{proof}
Let $\ell\in\mathcal L$ be transverse to $S$, which happens for almost every $\ell$. Then $\#(\ell\cap S_h)$ is an increasing function of $h$. For $h$ small enough, $C_h$ is connected, and thus $\lambda^2(\ell,C_h)\leq 1$. Therefore
$\#(\ell\cap S_h)-\lambda^2(\ell, C_h)$ is an increasing function of
$h$. Then \eqref{mono} follows by monotone convergence.
\end{proof}
We will see below, that the limit in \eqref{mono} is finite. For the moment, we show this fact for the  infinite cylinder over $C$.

\begin{proposition}\label{lemaconv}Let $C\subset\partial_\infty\mathbb H^3$ be a simple closed curve, and let $R={C\times (0,\infty)} \subset\mathbb H^3$. Then the following integrals converge and coincide
\[ \int_{\mathcal L}(\#(\ell \cap R)-\lambda^2(\ell,C))d\ell=\int_{A(2,1)}\sum_{x, y\in L\cap
C}\frac{(-1)^{\#(\overline{xy}\cap C)}}{\|y-x\|}dL<\infty\]
where $dL$ is an invariant measure in the space $A(2,1)$ of lines in $\mathbb R^2$. 
\end{proposition}
\begin{proof}After a vertical projection onto $\partial_\infty\mathbb H^3$, each geodesic $\ell$ is mapped to a segment $\overline{zw}$, and $R$ projects onto $C$. From the proof of Proposition \ref{p3} we know
\[
 \#(\ell\cap R)-\lambda^2(\ell,C)=-\sum_{x,y\in\overline{zw}\cap C}\epsilon(x)\epsilon(y)
\]
where $\epsilon(u)$ is the
sign at $u$ of the algebraic intersection $\overline{zw}\cdot C$. The equality of the integrals follows from \eqref{dzdw}, together with  (cf.\cite{santalo}, equation (4.2)) 
\[dzdw=\|t-s\|dsdtdL\] where $s,t$ are arc-length parameters of $z,w$
along $L$. 
In order to check the convergence, we use the following expression of the measure of lines in $\mathbb R^2$ (cf. \cite{pohl})
\[
 dL=|\sin\beta_x\sin\beta_y| \frac{dxdy}{\|y-x\|}
\]
where $x,y$ are intersection points with $C$, and $\beta_x,\beta_y$ are the oriented angles between $L$ and $C$ at $x,y$ respectively. Then the integral over $A(2,1)$ above becomes
\[
 -\int_{C\times C}\sin\beta_x\sin\beta_y \frac{dxdy}{\|y-x\|^2}.
\]
This integral converges since $\beta_x,\beta_y=O(\|y-x\|)$ as one can easily prove.
\end{proof}

\begin{lemma}\label{lemaunif}  Let $S, R\subset \mathbb H^3$ be two surfaces with the same cone-like end $\partial_\infty S=\partial_\infty R\subset \partial_\infty\mathbb H^3$. Then the following integrals are uniformly bounded for all $h>0$
\[
 \int_{\mathcal L}(\lambda^2(\ell,\partial S_h)-\lambda^2(\ell,\partial R_h))d\ell.
\]
\end{lemma}
\begin{proof}
Let $T_h$ be the region of $\{x\in\mathbb H^3|x_3=h\}$ bounded by $\partial S_h$ and $\partial R_h$. If a  geodesic $\ell$ is disjoint from $T_h$, then $\lambda^2(\ell,\partial S_h)=\lambda^2(\ell,\partial T_h)$. Hence, the integral above is bounded by the measure of geodesics intersecting $T_h$. By the Crofton formula \eqref{crofton}, this measure is proportional to the area of $T_h$. Since $S$ and $R$ are tangent at infinity, the euclidean area of $T_h$ has order $O(h^2)$. Therefore, its hyperbolic area is uniformly bounded.
\end{proof}

\begin{proposition}\label{convabs}The measure of non-trivial geodesics
\[
 \int_{\mathcal L}(\#(\ell\cap S)-\lambda^2(\ell,C))d\ell 
\]
is absolutely convergent.
\end{proposition}
\begin{proof}Clearly
 \begin{multline*}
  \int_{\mathcal L}(\#(\ell\cap S_h)-\lambda^2(\ell,\partial S_h))d\ell=\int_{\mathcal L}(\#(\ell\cap S_h)-\#(\ell\cap R_h))d\ell\\ + \int_{\mathcal L}(\#(\ell \cap R_h)-\lambda^2(\ell,\partial R_h))d\ell +\int_{\mathcal L}(\lambda^2(\ell,\partial R_h)-\lambda^2(\ell,\partial S_h))d\ell.
 \end{multline*}
The last three integrals are uniformly bounded by Propositions \ref{primer}, and \ref{lemaconv} and  Lemma \ref{lemaunif} respectively. Thus, by monotonicity, the following limit
 \[
  \lim_{h\to 0} \int_{\mathcal L}(\#(\ell\cap S_h)-\lambda^2(\ell,\partial S_h))d\ell,
 \]
exists and is finite. Since $\#(\ell\cap S)-\lambda^2(\ell,C)$ is positive, Lemma \ref{p8} shows the absolute convergence of the integral.
\end{proof}

\begin{coro}\label{apriori}
Let $S\subset\mathbb H^3$ be a surface with a cone-like end $C\subset\partial_\infty\mathbb H^3$. Then
 \[
  \int_S KdS=2\pi\chi(S)+\frac{1}{\pi}\int_{\mathcal L}(\#(\ell\cap S)-\lambda^2(\ell,C))d\ell-\delta(C)
 \]
where $\delta(C)$ depends only on the ideal curve $C$. All the integrals above are absolutely convergent.
\end{coro}

\begin{proof}
 The convergence has been established in Propositions \ref{p7} and \ref{convabs}. The result follows then from Proposition \ref{primer}.
\end{proof}

\begin{remark}
 We have assumed $C$ to be connected for simplicity. If $C$ is a collection of \emph{disjoint} simple closed curves, each of them arbitrarily oriented, the previous results hold without change. The key fact for the convergence is that $\lambda^2(\cdot,C)\leq 1$ outside a compact subset of $\mathcal{L}$. As for $\delta(C)$, it depends in this case on the orientations of $C$, as well as the relative positions of the several components.
\end{remark}

\begin{remark}
 In order to get explicit expressions of $\delta(C)$, it is enough to find, for each curve $C\subset \partial_\infty \mathbb H^3$, a surface $S$ with cone-like ends on $C$ for which the total curvature and the measure of non-trivial geodesics can be computed. In fact, this is what we did in subsections \ref{defecte} and \ref{inversa}.

\smallskip
However, in order to get the expression of $\delta(C)$ that appears in Theorem \ref{main}, we will need to follow a different strategy. 
\end{remark}

\section{Proof of Theorem \ref{main}}\label{proof}
\subsection{The space of chords}
Given a $\mathcal C^2$-differentiable manifold $S$ (without boundary), the \textit{space of
chords} of $S$ is a $\mathcal C^1$-differentiable manifold $M_S$ with boundary,  introduced by Whitney in \cite{whitney}, and described in detail in \cite{pohl}. This space is the blow-up of $S\times S$ along the diagonal. In particular, the interior of $M_S$ is $S\disjunt S$, and the boundary is the sphere bundle of oriented tangent directions of $S$
\[
\partial M_S=T^+S:=(TS\setminus \{(x,\vec 0)|x\in S\})/\mathbb R^+.
\]The reader is referred to \cite{pohl} for details on the differentiable structure of $M_S$. The following property describes this structure quite well:
given a regular injective $\mathcal C^2$-differentiable curve $x\colon [0,1)\rightarrow S$, the curve $c\colon(0,1)\rightarrow S\disjunt S$ defined by $c(t)=(x(0),x(t))$  extends to a $\mathcal C^1$-differentiable curve $c\colon [0,1)\rightarrow M_S$ which meets $\partial M_{S}$ transversely at $c(0)=[x'(0)]\in T^+S$. Another basic property is the following: the natural projections $p_1, p_2\colon S\disjunt S\rightarrow S$ extend naturally to differentiable submersions $p_1,p_2\colon  M_S\rightarrow S$. 

Let now $ S$ be a manifold with boundary. The space  $M_{ S}$ of chords of $ S$ is constructed as follows. We consider a manifold without boundary $\tilde S$ extending $ S$. Let $p_1,p_2:M_{\tilde S}\rightarrow \tilde S$ be the submersions mentioned above. The space of chords of $ S$ is then defined as $M_{ S}=p_1^{-1}( S)\cap p_2^{-1}( S)\subset M_{\tilde S}$ (i.e. $M_{ S}$ contains the chords of $\tilde S$ with both ends in $ S$). This space is a topological manifold with boundary, but this boundary is not smooth. Indeed, the interior of $M_S$ is $S\disjunt S$, and the boundary is $\partial M_{ S}=T^+ S\cup ( S\disjunt \partial S)\cup (\partial  S\disjunt S)$. The faces $T^+S$, $S\disjunt \partial S$, and $\partial  S\disjunt S$ are pairwise transverse outside $T^+\partial S$ (in fact, $\partial  S\disjunt S$ and $S\disjunt \partial S$ are tangent at points of $T^+\partial S$). Hence, $M_S\setminus T^+\partial S$ is a manifold with corners in the usual sense (cf. for instance \cite{lee}).

\subsection{Bundles and sections}
In this subsection we use the Klein model of hyperbolic space. Hence $\mathbb H^3$ is the interior of the closed unit ball $\mathbb B^3$ in $\mathbb R^3$. 
Let $\Psi:\mathbb B^3\disjunt\mathbb B^3\rightarrow \mathcal L^+$ be such that $(x,y)$ is mapped to the geodesic line going first through $x$ and then through $y$. This map extends naturally to $\Psi:M_{\mathbb B^3}\setminus T^+\mathbb S^2\rightarrow \mathcal L^+$. This extension is smooth by the results of  \cite{pohl}.

Let now $S^\circ\subset \mathbb H^3$ be a simply connected surface with a cone-like end $C\subset \partial_\infty\mathbb H^3$.  Then, the closure $S=S^\circ \cup C$ is a compact surface with boundary in  $\mathbb B^3$, transverse to the ideal sphere $\mathbb S^2=\partial_\infty \mathbb H^3$. Notice that we slightly  modified,  for simplicity, the notation used in the previous sections.

As seen in \cite{pohl}, the inclusion $M_S\subset M_{\mathbb B^3}$ is compatible with the differentiable structures. Hence, the mapping
\[\Phi\colon M_{S}\setminus T^+C\rightarrow\mathcal L^+\] obtained as a restriction of $\Psi$ is smooth. Note that this extends the mapping $\Phi$ defined in Proposition \ref{p3}.

\medskip
To simplify the notation  we denote  $B:=M_S\setminus T^+C$. By Proposition \ref{p3}, the measure of non-trival geodesics can be obtained by integrating $\Phi^*(d\ell)$ on $B$. Our aim is to compute this integral by means of Stokes' theorem, using an \emph{invariant} form whose differential is $d\ell$. Such a form is given by \eqref{primitiva}, but it lives in the bundle $\mathcal L_{1,2}$. In fact, there is no invariant form in $\mathcal L^+$ whose differential is $d\ell$. We are thus led to consider the
pull-back by $\Phi$ of the $\mathbb S^1$-bundle $\pi\colon
\mathcal{L}_{1,2}\rightarrow \mathcal L^+$. More precisely, we consider $E:=\Phi^*(\mathcal L_{1,2})=\{(z,\wp)\in B\times \mathcal
L_{2}\ |\ \Phi(z)\subset \wp\}$, and the following commutative diagram with the obvious mappings
\begin{equation} \label{diagrama}\begin{CD}
E @>\Phi' >> \mathcal L_{1,2}\\
@V{\Phi^*\pi}VV @VV{\pi}V\\
{B} @>\Phi>> {\mathcal L^+}
\end{CD}
\end{equation}

\medskip
It would be desirable to define a section of $\Phi^*\pi\colon E\rightarrow B$. This section should be canonically constructed in some geometric way. This can be done quite naturally, but only at the boundary $\partial B$; in fact only on
\[
 \partial B\setminus (C\disjunt C)=(T^+S\setminus T^+C)\cup (S^\circ\times C)\cup (C\times S^\circ)=\partial M_S\setminus M_C.
\]
Indeed, for $z=(x,[v])\in T^+S\setminus T^+C$ we choose the geodesic plane $\wp(z)$ spanned by $T_{x}S$. For $z=(x,y)\in C\times S^\circ$, and for $z=(y,x)\in S^\circ\times C$,  we choose the plane $\wp(z)$ tangent to $C$ at $x$ and containing $y$. Note that this definition does not extend to $C\disjunt C$: the two planes through $x,y\in C$ that are tangent to $C$ at $x$ and $y$ respectively, form a certain angle. In fact, this is precisely the angle $\theta$ appearing in Theorem \ref{main}.

To summarize, we have defined
\begin{eqnarray}\label{seccio} s\colon\partial M_S\setminus M_C&\longrightarrow&
E\\ z&\longmapsto&(z,\wp(z)))\notag
\end{eqnarray}
 in such a way that $T_{x}\wp(z)= T_{x}S$ if $z=(x,[v])\in T^+S$, and $T_{x}C\subset T_{x}\wp(z)$ for $z=(x,y)\in S^\circ\times C$, or $z=(y,x)\in C\times S^\circ$. 

We already noted that $s$ has a jump discontinuity in $C\disjunt C$. To solve this, we shall complete the image of $s$ with a family of fiber intervals interpolating the two one-sided limits  of $s$. However, these intervals are not well-defined in the $\mathbb S^1$-bundle $E$. We are led to consider an infinite cyclic cover of $E$ that gives an $\mathbb R$-bundle over $B$. Next we define this cover, and we show it admits a lift of $s$. Here we  take great advantage of the assumption that $S$ is simply connected.

\begin{proposition}
\label{P4}  The principal $\ \mathbb S^1$-bundle $\Phi^*\pi\colon E\longrightarrow B$ is trivial. Moreover, there is a bundle isomorphism
$\tau\colon E\longrightarrow B\times \mathbb S^1$, such that $\tau\circ s$ lifts over the covering $q\colon B\times \mathbb R\rightarrow B\times \mathbb S^1$; i.e., there exists  a continuous function
\[g\colon\partial M_S\setminus M_C\rightarrow \mathbb R
\] such that $q(x,g(x))=\tau\circ s(x)$ for every $x\in \partial M_S\setminus M_C$.\end{proposition}

\begin{proof}Consider an isotopy of embeddings $H:S\times [0,1]\rightarrow \mathbb B^3$ such that $H_0=id$ and $H_1(S^\circ)$ is contained in a plane $\wp\in\mathcal L_2$. We may construct the isotopy so that $H(C\times [0,1])\subset\mathbb S^2$. Put $\tilde H(x,y,t):=(H_t(x),H_t(y))$ for $(x,y)\in S\disjunt S$. Clearly $\tilde H$ extends continuously to $\tilde H:B\times [0,1]\rightarrow M_{\mathbb B^3}\setminus T^+\mathbb S^2$. Furthermore the bundle $ (\Psi\circ \tilde H_1)^*\pi$ clearly admits a global section $s_1\equiv \wp$. By the covering homotopy theorem, $s_1$ extends to a global section $\tilde s$ of  $(\Psi\circ \tilde H)^*\pi$, and therefore this principal bundle is trivial. This already shows that $E=(\Psi\circ\tilde H_0)^*(\mathcal{L}_{1,2})$ is trivial. Let 
\[
\tilde \tau:(\Psi\circ \tilde H)^*(\mathcal L_{1,2})\rightarrow M_S\times [0,1]\times \mathbb S^1
\]
be the isomorphism corresponding to this global section, i.e. such that $\tilde\tau\circ\tilde s(z,t)=(z,t,1)$. For each $t$, the construction above (cf.\eqref{seccio}) yields  a section $s_t$ of the restriction of $\Psi^*\pi$ to each $\partial M_{S_t}\setminus M_{\partial S_t}$, with $s_1\equiv \wp$, and $s_0=s$. Clearly these fit together to give a global section $\overline s$ of the restriction of $(\Psi\circ \tilde H)^*\pi$ to $\partial M_S\setminus M_C\times[0,1]$. From the construction of $\tilde\tau$ it is clear that the restriction of $\tilde\tau \circ\overline s$ to $\partial M_S\setminus M_C\times \{1\}$ lifts over $q$. Now the covering homotopy theorem implies that $\tilde\tau\circ\overline s$ lifts over all of $\partial M_S\setminus M_C\times[0,1]$. Hence we may take $\tau$ to be the restriction of $\tilde \tau$ to  $(\Psi\circ\tilde H_0)^*(\mathcal{L}_{1,2})=E$.
\end{proof}

While $g$ can not be continuously defined over all $\partial B$, we can consider the continuous extensions of $g$ to $S\disjunt C$ and $C\disjunt S$ respectively. We denote these extensions by $g_1$ and $g_2$ respectively. This way, $\theta(x,y)=g_2(x,y)-g_1(x,y)$ in the notation of Theorem \ref{main}, for every $(x,y)\in C\disjunt C$. Let $T_1\subset B\times \mathbb R$ be the graph of $g$ over $\partial B\setminus C\disjunt C$, completed with the graphs of $g_1$ and $g_2$ over $C\disjunt C$.
Now we sew in a family of vertical intervals over $C\disjunt C$ interpolating these two one-sided limits. To be precise we consider $T_2=C\disjunt C\times[0,1]$ together with the mapping
\begin{eqnarray}\label{t2}
 \sigma\colon T_2&\longrightarrow& C\disjunt C\times \mathbb R\\
(x,y,t)&\longmapsto &\big(x,y,t g_1(x,y)+(1-t)g_2(x,y)\big)\notag
\end{eqnarray}
Note that $\sigma$ is a smooth mapping, possibly non-regular.

In the following we will need to specify some orientations. The manifold $S\disjunt S$, and hence  $M_S$ is canonically oriented by $dS\wedge dS$. This induces an orientation on $\partial M_S$, and hence $T_1$ is naturally oriented. Finally, we choose on $T_2$ the orientation given by $dx\wedge dy\wedge dt$. This way, $T_1$ and $T_2$ induce opposite orientations on the graphs of $g_1$ and $g_2$.

\subsection{Stokes' theorem}
Before applying Stokes' theorem, the non-compacity of $M_S\setminus T^+C$ needs to be settled.  To this end, let us consider the  function $f:M_S\rightarrow [0,\infty]$ which vanishes on $T^+C$, and  assigns to each $z\in M_S\setminus T^+C$ the euclidean distance in $\partial_\infty\mathbb H^3$ between the ideal endpoints of $\Phi(z)$. Here $\mathbb H^3$ denotes again the Poincaré model.  Then
\[
\Delta_\epsilon:=f^{-1}([0,\epsilon))
\]
is a neigborhood of $T^+C$ inside $M_S$, and $M_S\setminus \Delta_\epsilon$ is compact. By Sard's theorem, for almost every $\epsilon$, the level set $\partial \Delta_\epsilon:=f^{-1}(\epsilon)$ is smooth and transverse to $\partial M_S$. Therefore  $M_S\setminus \Delta_\epsilon$ is a compact manifold with corners for almost every $\epsilon>0$. We denote this manifold by $B_\epsilon:=M_S\setminus \Delta_\epsilon$.

Let us consider $T_{1,\epsilon}=T_1\setminus \Delta'_\epsilon$, being $\Delta'_\epsilon=\pi^{-1}(\Delta_\epsilon)$. Here $\pi:B\times\mathbb R\rightarrow B$ is the projection on the first factor. For a generic $\epsilon>0$, Sard's theorem applied to $f\circ \pi$ ensures that $T_{1,\epsilon}$ is a compact manifold with corners. Also $T_{2,\epsilon}=T_2\setminus\sigma^{-1}(\Delta'_\epsilon)$ is a compact manifold with corners for almost every $\epsilon$. 

Since $T_{2,\epsilon}$ can be triangulated, we may think of $(T_{2,\epsilon},\sigma)$ as a (smooth) singular chain. Also $T_{1,\epsilon}$ can be thought of as a singular chain. Hence it makes sense to consider $T_\epsilon:=T_{1,\epsilon}+T_{2,\epsilon}$ as a chain in $\partial B\times\mathbb R\setminus \Delta_\epsilon'$. Its boundary is a singular chain of $\partial\Delta'_\epsilon:=\pi^{-1}\partial\Delta_\epsilon$, namely $\partial T_\epsilon=(T_1\cap \partial\Delta_\epsilon')+\sigma^{-1}(\partial\Delta_\epsilon')$.

In the next subsection, we will construct a chain $R_\epsilon$  in $\partial \Delta'_\epsilon$ such that $\partial R_\epsilon=-\partial T_\epsilon$. This way, $T_\epsilon+R_\epsilon$ is a cycle, and hence gives an element in the homology group $H_3(B_\epsilon\times\mathbb R)$. Since $S$ is contractible, we have the following homotopy equivalences
\[
 B_\epsilon\times\mathbb R\simeq B_\epsilon\simeq B\simeq S\disjunt S\simeq S\times\mathbb S^1\simeq\mathbb S^1.
\]
Therefore $H_3(B_\epsilon\times\mathbb R)=0$, and $T_\epsilon+R_\epsilon$ is a boundary.

By composing with $\pi:B\times \mathbb R\rightarrow B$ we can consider $\pi_*(T_\epsilon+R_\epsilon)$ as a cycle in $(\partial B)\setminus \Delta_\epsilon\cup \partial \Delta_\epsilon=\partial B_\epsilon$. The latter is an oriented compact manifold so $H_3(\partial B_\epsilon,\mathbb Z)\equiv\mathbb Z$, and $[\pi_*(T_\epsilon+R_\epsilon)]$ is given by some integer $n$. For any form $\omega\in \Omega^3(\partial B_\epsilon)$ one has
\[\int_{T_\epsilon+R_\epsilon}\pi_*\omega=
 n\int_{\partial B_\epsilon}\omega.
\]
Note that $\pi$ restricted to the interior of ${T_{1,\epsilon}}$ is a diffeomorphism preserving orientations. Thus, taking $\omega$ supported on the interior of $\pi(T_{1,\epsilon})$ makes clear that $n=1$.

Now, since $H^4(B_\epsilon)=0$, there exists some differential form $\omega\in\Omega^3(B_\epsilon)$ such that $d\omega=\Phi^*d\ell$. Therefore, by Stokes' theorem
\begin{equation}\label{cadena}
\int_{B_\epsilon}\Phi^*d\ell= \int_{B_\epsilon}d\omega=\int_{\partial B_\epsilon} \omega=\int_{T_\epsilon+ R_\epsilon}\pi^*\omega=\frac{1}{2}\int_{T_\epsilon+ R_\epsilon}\pi^*\alpha\wedge \varphi,
\end{equation}
since $2\pi^*\omega-\pi^*\alpha\wedge\varphi$ is closed by \eqref{primitiva}, and $T_\epsilon+ R_\epsilon$ is a boundary. Here we are abusing the notation for simplicity: by $\alpha$ and $\varphi$ we refer to $\Phi^*\alpha$ and $(\Phi'\circ\tau^{-1}\circ\pi )^*\varphi$ respectively. We will go on with this abuse, and hopefully no confusion will arise.

\subsection{Total curvature and ideal defect}
In this section we integrate $\pi^*\alpha\wedge\varphi$ over $T_1$ and $T_2$. We will get respectively the total curvature, and the ideal defect. 
\begin{proposition}\label{tc}
\[
 \lim_{\epsilon\to 0}{\int_{T_1\setminus\Delta'_\epsilon}\pi^*\alpha\wedge\varphi}=2\pi\int_S KdS.
\]
\end{proposition}
\begin{proof}
Recall that 
\[
 T_1=(\mathrm{graph}\ g|_{T^+S})\cup(\mathrm{graph}\ g_1)\cup(\mathrm{graph}\ g_2)
\]

We claim that $\pi^*\alpha\wedge \varphi$ vanishes on the graphs of $g_1$ and $g_2$. Recall these functions are defined over  $S\disjunt C$  and $C\disjunt S$ respectively.
Indeed, let $x$ be a local coordinate on  $C$. Then expression \eqref{alpha1} shows $\alpha\wedge dx=0$. Let now $c(t)$ be the lift in the graph of $g$ of a curve $(y(t),x)\in S\times C$ or $(x,y(t))\in C\times S$ with $x$ fixed. This curve corresponds to a curve $(\ell(t),\wp(t))=\Phi'\circ q(c(t))\in\mathcal L_{1,2}$. In the Poincaré model, the ideal boundaries of $\wp(t)$ are circles in $\partial_\infty\mathbb H^3$ tangent to $C$ at the point $x$. In order to compute $\varphi(c'(t))$ we take an isometry of $\mathbb H^3$ sending the point $x\in C$ to infinity. This way, $\ell(t)$  become vertical lines, and the geodesic planes $\wp(t)$ are transformed into a family of parallel vertical planes. By using the expression \eqref{connexio} of the connection forms, it is clear that $\varphi(c'(t))$ vanishes. This shows that $\varphi$ is a multiple of $\pi^* dx$ (on this region of $T_1$), and the claim follows.

We focus now on the graph over $T^+S$. Given $(x,l)\in T^+S^\circ$,  we take $v_1,v_2,v_3$ an orthonormal basis of $T_x\mathbb H^3$ such that $[v_1]=l$, and $v_3\bot T_xS$.  With such a moving frame, by \eqref{alpha}
\[
 \pi^*\alpha\wedge  \varphi =(\omega_2\wedge\omega_3-\omega_{12}\wedge\omega_{13})\wedge\omega_{23}=-\omega_{12}\wedge\omega_{13}\wedge\omega_{23}=-K(x)\ \omega_{12}\wedge dS.
\]
By Proposition \ref{p7},  this volume form has finite integral on $T^1S$, the euclidean unit tangent of $S$ (in the Poincaré model). Then we may use Lebesgue's dominated convergence theorem to get
\begin{multline*}
 \lim_{\epsilon\to 0}\int_{\mathrm{graph}\ g|_{T^+S}\setminus \Delta'_\epsilon}\pi^*\alpha\wedge \varphi=\lim_{\epsilon\to 0}\int_{T^+S\setminus\Delta_\epsilon}K\ \,\omega_{12}\wedge dS\\=\int_{T^+S}K\ \,\omega_{12}\wedge dS=2\pi\int_{S}KdS,
\end{multline*}
where we used the natural orientation of $T^+S$, which is opposite to the one induced by $M_S$.
\end{proof}

\begin{proposition}\label{id}
\[
  \lim_{\epsilon\to 0}\int_{T_2\setminus\Delta'_\epsilon}\pi^*\alpha\wedge \varphi=2\int_{C\times C} \theta\sin\theta \frac{dxdy}{\|y-x\|^2}.
\]
\end{proposition}
\begin{proof}
Recall that $T_2$ is mapped to the union of vertical segments in $B\times \mathbb R$ interpolating the one-sided limits of $g$ along $C\disjunt C$ (cf. \eqref{t2}). This segments have length $\theta$, and $\varphi$ restricted to the fibers is precisely the length element. Hence, Fubini's theorem gives
\[
 \lim_{\epsilon\to 0}\int_{T_2\setminus\Delta'_\epsilon}\pi^*\alpha\wedge \varphi=\lim_{\epsilon\to 0}\int_{C\times C\setminus\Delta_\epsilon}\theta\  \alpha=2\lim_{\epsilon\to 0}\int_{C\times C\setminus\Delta_\epsilon}\theta\sin\theta\,\frac{dxdy}{\|y-x\|^2}
\]
where we have used \eqref{alpha2}. The result follows since $\theta=O(\|y-x\|)$, which is easy to prove.
\end{proof}
The proof of Theorem \ref{main} is almost finished. So far we have seen (cf. Propositions \ref{p3} and \ref{convabs}, equation \eqref{cadena}, and Propositions \ref{tc} and \ref{id})
\begin{multline}\label{casi}
\int_{\mathcal L}(\#(\ell\cap S)-\lambda^2(\ell, C))d\ell=2\int_{B}\Phi^*(d\ell)=2\lim_{\epsilon\to 0}\int_{B_\epsilon}\Phi^*(d\ell)\\
=\lim_{\epsilon\to 0}\int_{R_\epsilon}\pi^*\alpha\wedge\varphi+2\pi\int_S KdS+2\int_{C\disjunt C}\theta\sin\theta\frac{dxdy}{\|y-x\|^2}.
\end{multline}
It remains only to check that the contribution of $R_\epsilon$ vanishes as $\epsilon\to 0$. This is done in the next subsection.

\subsection{Asymptotic estimations.}
Next we construct a singular chain $R_\epsilon$ in $\partial \Delta'_\epsilon$ with $\partial R_\epsilon=-\partial T_\epsilon$ as promised. Let $v:\partial  \Delta_\epsilon\rightarrow E$ be the section given by the vertical planes. With the same kind of arguments as in the proof of Proposition \ref{P4} one shows that $\tau\circ v$ lifts over $q$; i.e. there exists $h:\partial \Delta_\epsilon\rightarrow \mathbb R$ such that $q(x,h(x))=\tau\circ v(x)$. Let $R_{0,\epsilon}\subset \partial \Delta'_\epsilon$ be the graph of $h$ over $\partial\Delta_\epsilon$.  In particular, both $T_1\cap \partial \Delta'_{\epsilon}$ and $\partial R_{0,\epsilon}$ project by $\pi$ onto $\partial B\cap \partial\Delta_\epsilon$.  Next we consider the union of vertical segments joining these two graphs. More precisely, we define $R_{1,\epsilon}=(C\disjunt S\cap \partial\Delta_\epsilon)\times [0,1]$, $R_{2,\epsilon}=(S\disjunt C\cap \partial\Delta_\epsilon)\times [0,1]$, $R_{3,\epsilon}=(T^+S\cap \partial\Delta_\epsilon)\times [0,1]$ together with the mappings
\begin{eqnarray*}
\sigma_i:R_{i,\epsilon} &\rightarrow& \partial\Delta_\epsilon'\\
(z,t)&\mapsto&(z,tg_i(z)+(1-t)h(z)),
\end{eqnarray*}
for $i=1,2$. As for $\sigma_3$, we take the same definition with $g$ in the place of $g_i$.
We think of $\{R_{i,\epsilon},i=0,1,2,3\}$ as singular chains in $\partial \Delta'_\epsilon$, and we define $R_\epsilon=\sum_{i=0}^3 R_{i,\epsilon}$. A careful study of the boundaries shows that $\partial R_\epsilon=-\partial T_\epsilon$.

To finish the proof of Theorem \ref{main} we only need to establish the following.

\begin{proposition}\label{last}
\[
 \lim_{\epsilon\to 0}\int_{R_\epsilon}\pi^*\alpha\wedge\varphi=0
\]
\end{proposition}
\begin{proof}Here we assume that $S$ coincides with the cylinder $C\times (0,\infty)\subset \mathbb H^3$ in a neighborhood of infinity. This is no loss of generality by Corollary \ref{apriori}, and equation \eqref{casi}. 

In particular we may assume that $h$ and $g$ coincide over $T^+S$. Hence, the integral over $R_{3,\epsilon}$ vanishes. Next we concentrate on $R_{1,\epsilon}$ (the study of $R_{2,\epsilon}$ being obviously symmetric). Given  $(x,y)\in C\times S\cap \partial \Delta_\epsilon$, let $\{x,z\}$ be the ideal endpoints of $\Phi(x,y)$. The euclidean distance between $x$ and $z$ is constant $\epsilon$. Hence, given $x\in C$ the point $z$ is determined by the angle $\gamma$ between the straight segment $\overline{xz}$ and $T_xC$. This angle $\gamma$ coincides with the length of the fiber interval $\pi^{-1}(x,y)\cap R_{1,\epsilon}$. By Fubini's theorem
\[
 \int_{R_{1,\epsilon}}\pi^*\alpha\wedge\varphi=\int_{\pi(R_{1,\epsilon})}\gamma\cdot \alpha=\int_{\pi(R_{1,\epsilon})}\gamma\cos\gamma\frac{dxd\gamma}{\epsilon}
\]
since  $\alpha=\cos\gamma \epsilon^{-1} dxd\gamma$ (cf. \eqref{alpha1}). The previous integrals vanish when $\epsilon\to 0$ since $\gamma=O(\epsilon)$. Indeed, the chords of length smaller than $\epsilon$ make angles with $C$ of order $O(\epsilon)$.

It remains to estimate the integral over $R_{0,\epsilon}$.
Let $(x',y')\in S\disjunt S$ be a generic point in $M_S\cap \partial \Delta_\epsilon$. Let $x,y\in C$ be the vertical projections of $x',y'$. Let $z,w$ be the ideal endpoints of the geodesic $\ell=
\Phi(x',y')$. 
We choose Euclidean coordinates on $\mathbb R^2\equiv\partial_\infty\mathbb H^3$ so that $x_2=y_2=0$. We can assume $z_1<x_1<y_1<w_1$. Then
\[
dz_2=(t+\sigma)\frac{dx_2}{\sigma}-t\frac{dy_2}{\sigma} 
\]
\[
 dw_2=(\sigma+t-\epsilon)\frac{dx_2}{\sigma}+(\epsilon-t)\frac{dy_2}{\sigma}.
\]
where $t=x_1-z_1$, $\sigma=y_1-x_1$, and thus $w_1-y_1=\epsilon-t- \sigma$. Recall that $(x',y')$ corresponds (through $\Phi'\circ\tau^{-1}\circ\pi$) to the pair $(\ell,\wp)$ where $\wp$ is the vertical plane containing $\ell$. We take an adapted orthonormal frame $(p;g_1,g_2,g_3)$ such that $p\in\mathbb \ell$ projects vertically onto $\frac12(z+w)\in\partial_\infty\mathbb H^3$ and $g_3\bot\wp$. Then \eqref{connexio} and the equations above yield
\[
\varphi=\langle \nabla g_2,g_3\rangle=\theta_{23}=\frac{1}{\epsilon}(dz_2+dw_2)=\frac{1}{\epsilon\sigma}\left(({2t+2\sigma-{\epsilon}})dx_2+({\epsilon}-2t)dy_2\right).
\]
Since $\pi^*\alpha=d\varphi$ we get
\[
 \pi^*\alpha=\frac{2}{\epsilon\sigma}({dt\wedge dx_2}-{dt\wedge dy_2})-\frac{2t-\epsilon}{\epsilon\sigma^2}d\sigma\wedge dx_2-\frac{\epsilon-2t}{\epsilon\sigma^2}d\sigma\wedge dy_2.
\]
Hence, recalling that $x,y$ are restricted to move along $C$, we get
\[
 \pi^*\alpha\wedge\varphi=\frac{4}{\epsilon^2\sigma}dt\wedge dx_2\wedge dy_2=\frac{4}{\epsilon^2}\sin\beta_x\sin\beta_y\frac{dt\wedge dx\wedge dy}{\|y-x\|},
\]
where $dx,dy$ denote arc-length elements on $C$, and $\beta_x,\beta_y$ are angles between $C$ and the segment $\overline{xy}$.
Therefore
\[
 \int_{R_{0,\epsilon}}\pi^*\alpha\wedge\varphi=\frac{4}{\epsilon^2}\int_{x,y\in C,\|y-x\|\leq\epsilon}\sin\beta_x\sin\beta_y(\epsilon-\|y-x\|)\frac{dx\wedge dy}{\|y-x\|}
\]
which goes to zero when $\epsilon\to 0$, since $\beta_x,\beta_y=O(\|y-x\|)$.

\end{proof}

\bibliographystyle{plain}

\end{document}